\documentclass[a4paper]{amsart}
\usepackage{url}
\usepackage{amsmath}
\usepackage[makeroom]{cancel}

\usepackage{graphicx}
\usepackage[all]{xy}
\usepackage[mathscr]{eucal}
\usepackage{amsmath,amssymb,amsfonts}

\newtheorem{theorem}{Theorem}[section]
\newtheorem{lemma}[theorem]{Lemma}
\newtheorem{corollary}[theorem]{Corollary}

\newtheorem{examples}[theorem]{Examples}
\newtheorem{proposition}[theorem]{Proposition}

\usepackage{stackengine}
\usepackage{xcolor}
\usepackage[all]{xy}

\title[An embedding theorem]{Embedding Boolean ample monoids as full submonoids of Boolean inverse monoids}

\author{Mark V. Lawson}
\address{Mark V. Lawson, Department of Mathematics,
Maxwell Institute for Mathematical Sciences,
Heriot-Watt University,
Riccarton,
Edinburgh EH14 4AS,
UNITED KINGDOM}
\email{m.v.lawson@hw.ac.uk}

\begin{document}
\dedicatory{This paper is dedicated to Victoria Gould on the occasion of her birthday.}

\begin{abstract}
We show that, in certain circumstances, a Boolean ample monoid may be fully embedded into a Boolean inverse monoid
in a way that generalizes how right reversible cancellative monoids may be embedded into groups.
We use groupoids of fractions and non-commutative Stone duality to prove the result.
\end{abstract}
\maketitle

\section{Introduction}

A semigroup $S$ is said to be {\em ample} if it is equipped with two unary operations, denoted by $a \mapsto a^{\ast}$ and $a \mapsto a^{+}$,
satisfying the following axioms:
\begin{itemize}
\item[{\rm (A1).}] $(s^{\ast})^{\ast} = s^{\ast}$, and $(s^{+})^{+} = s^{+}$, and $(s^{\ast})^{+} = s^{\ast}$, and $(s^{+})^{\ast} = s^{+}$. 
\item[{\rm (A2).}] $(s^{\ast}t^{\ast})^{\ast} = s^{\ast}t^{\ast}$, and $(s^{+}t^{+})^{+} = s^{+}t^{+}$.
\item[{\rm (A3).}] $s^{\ast}t^{\ast} = t^{\ast}s^{\ast}$, and $s^{+}t^{+} = t^{+}s^{+}$.
\item[{\rm (A4).}] $ss^{\ast} = s$ and $s^{+}s = s$
\item[{\rm (A5).}] $(st)^{\ast} = (s^{\ast}t)^{\ast}$ and $(st)^{+} = (st^{+})^{+}$.
\item[{\rm (A6).}] $t^{\ast}s = s(ts)^{\ast}$ and $st^{+} = (st^{+})^{+}s$.
\item[{\rm (A7).}] If $ac = bc$ then $ac^{+} = bc^{+}$, and if $ca = cb$ then $c^{\ast}a = c^{\ast}b$.
\end{itemize}
Elements of the form $a^{\ast}$ or $a^{+}$ are called {\em projections}.
It is easy to show that in an ample semigroup the projections are precisely the idempotents;
thus the idempotents commute.

We denote the set of idempotents of a semigroup $S$ by $\mathsf{E}(S)$.
We say that $S$ is a {\em full} subsemigroup of a semigroup $T$
if $S$ is a subsemigroup of $T$ and $\mathsf{E}(T) \subseteq S$.
An embedding $\iota \colon S \rightarrow T$ is {\em full} if $\iota (S)$ is a full subsemigroup of $T$.

\begin{examples}\mbox{}
{\em 
\begin{enumerate}
\item Ample monoids with a single idempotent are precisely the cancellative monoids.
\item The full subsemigroups of inverse semigroups are ample.  
\end{enumerate}
}
\end{examples}
The question of which cancellative monoids can be embedded into groups is a difficult one.
The analogous question for ample monoids is which of these can be fully embedded into inverse monoids.
In this paper, we shall answer a special case of this question.
We return to cancellative monoids for motivation.
Let $C$ be a cancellative monoid. 
Suppose that $C$ can be embedded into a group $G$ in such a way that $C^{-1}C = G$.
Then in fact $C$ is {\em right reversible} in the sense that for all $a,b \in C$ we have that $Ca \cap Cb \neq \varnothing$.
It is well-known that every right reversible cancellative monoid $C$ can be embedded into a group $G$ in such a way that $C^{-1}C = G$ \cite{CP}.
The group $G$ is then called a {\em group of fractions}.
We shall generalize this theorem to a class of ample monoids, called Boolean ample monoids.
We shall replace groups by Boolean inverse monoids,
and groups of fractions by groupoids of fractions in the sense of \cite{GZ}.
Rather than just an embedding we shall obtain a full embedding.
Our proof will use non-commutative Stone duality;
so, it is there we shall begin.

\section{Non-commutative Stone duality}

In this section, I shall describe the duality between Boolean restriction monoids and Boolean categories.
This duality was first established in \cite{KL}.
We shall need a proof here that is closer to the classical proof of Stone duality.
The details can be found in \cite{K2025} so we shall just sketch out the theory here;
one difference from \cite{K2025} is that we shall use prime filters and not germs.
We shall be working with posets  (the order will always be the natural partial order).
If $(P,\leq)$ is s poset and $X \subseteq P$ then 
$$X^{\uparrow} = \{p \in P \colon x \leq p \text{ for some } x \in X\}.$$

We begin with classical Stone duality \cite{BS}.
Let $B$ be a Boolean algebra.
A {\em filter} in $B$ is a subset $A$ closed under finite meets such that $A = A^{\uparrow}$.
A filter $A$ is {\em proper} if $0 \notin A$.
A maximal proper filter is called an {\em ultrafilter}.
A proper filter $A$ is said to be {\em prime} if $a \vee b \in A$ implies that $a \in A$ or $b \in A$.
In a Boolean algebra, prime filters are the same as ultrafilters.
If $a \in B$ denote by $X_{a}$ the set of all prime filters that contain $a$.
Put $\beta$ equal to the set of all $X_{a}$.
Denote by $\mathsf{X}(B)$ the set of prime filters of $B$.
Then $\beta$ is the basis for a topology on $\mathsf{X}(B)$,
which makes it a Boolean space --- that is a compact, Hausdorff $0$-dimensional space.
We call $\mathsf{X}(B)$ the {\em Stone space} of $B$.
On the other hand, given a Boolean space $X$, the set of clopen subsets $\mathsf{B}(X)$ forms a Boolean algebra.
The essence of classical Stone duality is that $B \cong \mathsf{B}(\mathsf{X}(B))$ and $X \cong \mathsf{X}(\mathsf{B}(X))$.

We now turn to the generalization of classical Stone duality.
We begin by describing the two categories that are in duality.

A {\em restriction monoid} $S$ satisfies the axioms (A1)--(A6) above.
The set of projections of $S$ is denoted by $\mathsf{Proj}(S)$;
all projections are idempotents, but not every idempotent is a projection.
If $S$ is a restriction monoid, define the natural partial order $a \leq b$ iff $a = ba^{\ast}$.
Say that $a$ and $b$ are {\em compatible}, denoted by $a \sim b$, iff $ab^{\ast} = ba^{\ast}$ and $a^{+}b = b^{+}a$.
A {\em Boolean restriction monoid} is a restriction monoid in which every pair of compatible elements has a join,
multiplication distributes over such joins (from both sides),
and the set of projections forms a Boolean algebra under the natural partial order.
A {\em homomorphism} of Boolean restriction monoids is a monoid homomorphism of restriction monoids that maps zero to zero
and preserves binary joins.
Restriction monoids and their homomorphisms form a category which we shall denote by $\mathscr{R}$.

Apart from categories of structures, all categories will be small and treated as algebraic generalizations of monoids;
thus we shall identify objects with identities.
If $C$ is a category then its set of identities will be denoted by $C_{o}$.
If $a \in C$ we denote the unique right identity of $a$ by $\mathbf{d}(a)$
and the unique left identity  by $\mathbf{r}(a)$.
The product $\exists ab$ precisely when
$\mathbf{d}(a) = \mathbf{r}(b)$. 
The partial multiplication in a category is denoted by $\mathbf{m}$. 
If the category $C$ is a subcategory of the category $D$ so that $C_{o} = D_{o}$,
then we say that $C$ is a {\em wide} subcategory of $D$.
A {\em topological category} $C$ is one equipped with a topology so that the maps $\mathbf{d}$, $\mathbf{r}$ and $\mathbf{m}$
are continuous.
A {\em cancellative category} is one in which if $ab = ac$ is defined then $b = c$,
and if $ba = ca$ is defined then $b = c$.
A {\em groupoid} is a category such that for every element $g$ there is a (perforce) unique element $g^{-1}$
such that $g^{-1}g = \mathbf{d}(g)$ and $gg^{-1} = \mathbf{r}(g)$. 
A topological category is said to be {\em \'etale} if the maps $\mathbf{d}$ and $\mathbf{r}$ are local homeomorphisms.
A {\em Boolean category} is an \'etale topological category whose space of identities is a Boolean space.
We shall need maps a bit different from functors between Boolean categories.
Let $C$ and $D$ be categories.
A binary relation $\rho \subseteq C \times D$, which we shall regard as an arrow from $D$ to $C$, is called a 
{\em relational functor}\footnote{It is not a functor.} if it satisfies the following three conditions:
\begin{itemize}
\item[{\rm (RF1).}] For each $e \in D_{o}$ there is a unique $f \in C_{o}$ such that $(f,e) \in \rho$.
\item[{\rm (RF2).}] If $(c,d) \in \rho$ then $(\mathbf{d}(c), \mathbf{d}(d)) \in \rho$ and $(\mathbf{r}(c), \mathbf{r}(d)) \in \rho$. 
\item[{\rm (RF3).}] If $(a,b), (c,d) \in \rho$ and $\exists ac$ and $\exists  bd$ then $(ac,bd) \in \rho$.
\end{itemize}
We say that such a relational functor is a {\em covering, relational functor} if the following two conditions hold:
\begin{itemize}
\item[{\rm (CRF1).}] If $(a,b), (a,b') \in \rho$ and $\mathbf{d}(b) = \mathbf{d}(b')$ then $b = b'$, and dually.
\item[{\rm (CRF2).}] If $e$ and $f$ are identities such that $(f,e) \in \rho$ and $\mathbf{d}(a) = f$ then there exists $b$ such that
$\mathbf{d}(b) = e$ and $(a,b) \in \rho$, and dually.
\end{itemize}
If $C$ and $D$ are topological categories and $\rho$ is a relational functor from $D$ to $C$,
then we say that $\rho$ is {\em continuous} if for every open set $U$ in $C$ the set 
$$\rho^{-1}(U) = \{d \in D \colon \exists c \in U, \text{ such that } (c,d) \in \rho \}$$
is open.
We say that $\rho$ is {\em proper} if $U$ compact implies that  $\rho^{-1}(U)$ is compact. 
Boolean categories together with the proper, continuous, covering, relational functors form a category which we shall denote by $\mathscr{C}$.
We have the following theorem.

\begin{theorem}\label{them:ruth} 
The category $\mathscr{R}$ is dually equivalent to the category $\mathscr{C}$.
\end{theorem}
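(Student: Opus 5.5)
We prove Theorem~\ref{them:ruth} by constructing two contravariant functors, $\mathsf{C}\colon \mathscr{R} \to \mathscr{C}$ and $\mathsf{R}\colon \mathscr{C} \to \mathscr{R}$, and two natural isomorphisms $S \cong \mathsf{R}(\mathsf{C}(S))$ and $C \cong \mathsf{C}(\mathsf{R}(C))$. The model throughout is classical Stone duality, with prime filters used in place of germs.

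\emph{From monoids to categories.} Let $S$ be a Boolean restriction monoid. A \emph{prime filter} of $S$ is a proper filter $A$ with respect to the natural partial order, that is, $A = A^{\uparrow}$, $A$ is down-directed, $0 \notin A$, and $A$ is prime with respect to joins of compatible elements. Let $\mathsf{C}(S)$ be the set of prime filters of $S$. For such an $A$ we will show three things.
\begin{enumerate}
\item $\mathbf{d}(A) = \{a^{\ast} \colon a \in A\}^{\uparrow}$ and $\mathbf{r}(A) = \{a^{+} \colon a \in A\}^{\uparrow}$ are prime filters of $\mathsf{Proj}(S)$, extended upward to $S$.
\item $A = (a\,\mathbf{d}(A))^{\uparrow}$ for any $a \in A$.
\item The product $A \cdot B = (AB)^{\uparrow}$ is defined exactly when $\mathbf{d}(A) = \mathbf{r}(B)$, and it is again a prime filter.
\end{enumerate}
The identities of $\mathsf{C}(S)$ are the prime filters containing projections. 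These correspond bijectively to the ultrafilters of the Boolean algebra $\mathsf{Proj}(S)$, so the space of identities is the Stone space of $\mathsf{Proj}(S)$ and hence a Boolean space. We topologize $\mathsf{C}(S)$ with the basic sets $X_{a} = \{A \colon a \in A\}$. Axiom (A6), $a \leq b \Rightarrow X_a \subseteq X_b$, and distributivity together give $X_a \cap X_b = \bigcup X_c$ over common lower bounds $c$. The map $\mathbf{d}$ restricted to $X_a$ is a homeomorphism onto $X_{a^{\ast}}$, using item 2, so $\mathsf{C}(S)$ is étale. Continuity of $\mathbf{m}$ follows from $X_aX_b \subseteq X_{ab}$.

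A morphism $\theta\colon S \to T$ in $\mathscr{R}$ is sent to the relation
\[
\rho_\theta = \{(A,B) \colon \theta^{-1}(B) = A\} \subseteq \mathsf{C}(S) \times \mathsf{C}(T).
\]
We check that $\theta^{-1}$ of a prime filter is a prime filter, which uses the facts that $\theta$ preserves $0$ and binary joins. The axioms (RF1)--(RF3) and (CRF1)--(CRF2) are then verified. The covering condition (CRF2) requires lifting: given a prime filter $F$ over $\mathbf{d}(A)$ in $T$, we take $(\theta(a)F)^{\uparrow}$ for $a \in A$. Properness and continuity come from $\rho^{-1}(X_a) = X_{\theta(a)}$.

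\emph{From categories to monoids.} Let $C$ be a Boolean category, and let $\mathsf{R}(C)$ be the set of compact-open \emph{local bisections}, meaning compact-open sets $U$ on which $\mathbf{d}$ is injective. Multiplication is the setwise product. We define $U^{\ast} = \mathbf{d}(U)$ and $U^{+} = \mathbf{r}(U)$, regarded as subsets of $C_o$. Then $\mathsf{Proj}(\mathsf{R}(C))$ is the algebra of clopen subsets of $C_o$, and $C_o$ itself is the identity of the monoid. Compatible bisections have union a bisection, and this union is the join. For a morphism $\rho$ in $\mathscr{C}$ we set $\mathsf{R}(\rho)(U) = \rho^{-1}(U)$. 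Properness and continuity make this compact-open. The conditions (CRF1) and (CRF2) make it a local bisection and make $\rho^{-1}$ multiplicative, since each product in $D$ lifts uniquely.

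\emph{The isomorphisms.} The map $S \to \mathsf{R}(\mathsf{C}(S))$ is $a \mapsto X_a$. Injectivity comes from a separation lemma: if $a \not\leq b$, then some prime filter contains $a$ but not $b$. To obtain it, we apply the Boolean prime ideal theorem to $\mathsf{Proj}(S)$ at the projection $a^{\ast}\cdot\neg(\text{the largest } e \text{ with } ae \leq b)$ and translate up. Surjectivity uses compactness: a compact-open bisection is a finite union of sets $X_{a_i}$, which can be refined to pairwise compatible elements whose join $a$ satisfies $X_a = U$. The map $C \to \mathsf{C}(\mathsf{R}(C))$ is $x \mapsto \{U \colon x \in U\}$. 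Bijectivity is again classical, using that $C$ is étale and that $C_o$ is Hausdorff and zero-dimensional, so compact-open bisections form a basis. Naturality is then a routine check.

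The main obstacle is the prime filter theory in the non-commutative setting. This means proving that $(a\,\mathbf{d}(A))^{\uparrow}$ recovers $A$, that products of composable prime filters are prime, and the separation lemma. Each of these reduces to the Boolean algebra $\mathsf{Proj}(S)$ via the identity $a = b a^{\ast}$ for $a \leq b$ and distributivity, and I would prove them first as standalone lemmas.
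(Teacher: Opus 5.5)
Your overall architecture matches the paper's: the functors $S\mapsto\mathsf{C}(S)$ and $C\mapsto$ compact-open local bisections, with unit maps $a\mapsto X_a$ and $x\mapsto F_x$. The gap is the action on morphisms. You set $\rho_\theta=\{(A,B)\colon \theta^{-1}(B)=A\}$, on the grounds that $\theta^{-1}$ of a prime filter is a prime filter. That is the classical Stone-duality fact, and it fails here. The set $\theta^{-1}(B)$ is upward closed, proper and prime, but it need not be down-directed, since a restriction monoid has no meets for $\theta$ to preserve.

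For example, let $S=\{0,1,g\}$ with $g^{2}=1$ (the two-element group with a zero adjoined), let $T=\{0,1\}$, and put $\theta(1)=\theta(g)=1$, $\theta(0)=0$. This is a morphism of $\mathscr{R}$, because the only compatible pairs in $S$ are trivial ($1\cdot g^{\ast}\neq g\cdot 1^{\ast}$). Then $\theta^{-1}(\{1\})=\{1,g\}$, and the only common lower bound of $1$ and $g$ is $0$. So your $\rho_\theta$ is empty, violating (RF1).

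No partial function $\mathsf{C}(T)\to\mathsf{C}(S)$ can repair this. By (RF1), the unique point of $\mathsf{C}(T)$ must be related to the identity $\{1\}$ of $\mathsf{C}(S)$. Then (CRF2), applied to $\{g\}$ (which satisfies $\mathbf{d}(\{g\})=\{1\}$), forces that point to be related to $\{g\}$ as well. This is exactly why morphisms of $\mathscr{C}$ are relations.

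The missing idea is Kudryavtseva's observation. When $\theta^{-1}(B)\neq\varnothing$, it is a disjoint union of prime filters, namely the classes of the relation $a\approx b$ iff $c\le a,b$ for some $c\in\theta^{-1}(B)$. One must then put $(A,B)\in\rho_\theta$ iff $A\subseteq\theta^{-1}(B)$. Only with this definition does $\rho_\theta^{-1}(X_a)=X_{\theta(a)}$ hold; yours gives only $\subseteq$. That identity is what you need for continuity, for properness, and for recovering $\theta$ from $\rho_\theta$.

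Two smaller points also need fixing.

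First, a local bisection must have both $\mathbf{d}$ and $\mathbf{r}$ injective, as in the paper. With $\mathbf{d}$ alone, the axiom $st^{+}=(st^{+})^{+}s$ fails. Take $U=\{1_y,u\}$ in the pair groupoid on $\{y,z\}$, with $u\colon z\to y$, and $t=\{1_y\}$; then $Ut^{+}=\{1_y\}$ but $(Ut^{+})^{+}U=U$. Moreover $a\mapsto X_a$ is no longer surjective, because your own surjectivity argument needs two-sided compatibility.

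Second, in the separation lemma a largest projection $e$ with $ae\le b$ need not exist. Boolean categories are not assumed Hausdorff, and in non-Hausdorff examples $I=\{e\in\mathsf{Proj}(S)\colon ae\le b\}$ can be a non-principal ideal of $\mathsf{Proj}(S)$. Argue with $I$ itself instead. It is an ideal not containing $a^{\ast}$, so there is an ultrafilter $F$ of $\mathsf{Proj}(S)$ with $a^{\ast}\in F$ and $F\cap I=\varnothing$. Then $(aF)^{\uparrow}$ contains $a$ but not $b$.
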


We shall now sketch out how this theorem is proved.

We begin by defining the contravariant functor from  $\mathscr{R}$ to $\mathscr{C}$.
We describe it on objects first.
Let $S$ be a Boolean restriction monoid.
A {\em filter} in $S$ is a subset $A$ such that $A = A^{\uparrow}$ and for all $a,a' \in A$ there exists $a'' \in A$
such that $a'' \leq a',a$.
The filter $A$ is said to be {\em proper} if $0 \notin A$.
A maximal proper filter is called an {\em ultrafilter}.
A proper filter $A$ is said to be {\em prime} if $a \vee b \in A$ implies $a \in A$ or $b \in A$.
Just as with Boolean algebras, prime filters and ultrafilters are the same thing.
Let $\mathsf{C}(S)$ be the set of prime filters of $S$.
If $X$ is any subset define $X^{\ast} = \{a^{\ast} \colon a \in A\}$ and $A^{+} = \{a^{+} \colon a \in A\}$.
Define $\mathbf{d}(A) = (A^{\ast})^{\uparrow}$ and $\mathbf{r}(A) = (A^{+})^{\uparrow}$.
If $A$ is a prime filter then $\mathbf{d}(A)$ and $\mathbf{r}(A)$ 
are both prime filters.
If $A$ and $B$ are both prime filters and $\mathbf{d}(A) = \mathbf{r}(B)$ 
define
$$A \cdot B = (AB)^{\uparrow},$$ 
also a prime filter.
Then with respect to the partially defined operation $\cdot$, the set $\mathsf{C}(S)$ is a category.
The identities of this category are precisely the prime filters containing projections.
A remarkable feature of prime filters is that they are essentially `cosets' of prime filters that contain projections:
if $A$ is a prime filter then 
$$A = (a\mathbf{d}(A))^{\uparrow},$$ 
where $a \in A$.
This result immediately implies that if $A$ and $B$ are (prime) filters such that $\mathbf{d}(A) = \mathbf{d}(B)$
and $A \cap B \neq \varnothing$ then $A = B$, and dually.
We can relate prime filters in $S$ with prime filters in $\mathsf{Proj}(S)$:
if $E$ is a prime filter in $\mathsf{Proj}(S)$ then $A = (aE)^{\uparrow}$, where $a^{\ast} \in E$, is a prime filter in $S$ containing $a$
and every prime filter in $S$ is of this form, and dually.
We now need to equip the category $\mathsf{C}(S)$ with a topology.
For each $a \in S$, let $X_{a}$ be the set of all prime filters in $S$ that contain $a$.
Let $\beta$ be the set of all such $X_{a}$.
Then $\beta$ is the basis for a topology on $\mathsf{C}(S)$,
with respect to which, $\mathsf{C}(S)$ is an \'etale topological category.
The space $\mathsf{C}(S)_{o}$ is homeomorphic to the Stone space of $\mathsf{Proj}(S)$.
It follows that $\mathsf{C}(S)$ is a Boolean category.
We now need to describe the effect of our contravariant functor on homomorphisms.
Let $\theta \colon S \rightarrow T$ be a homomorphism between Boolean restriction monoids
and let $A$ be a prime filter in $T$.
It could happen that $\theta^{-1}(A)$ is empty.
If it is not empty, then Ganna Kudryavtseva discovered that $\theta^{-1}(A)$
is a disjoint union of prime filters: the easiest way to see this is to define the relation $\approx$ on  
$\theta^{-1}(A)$ by $a \approx b$ iff there is $c \in \theta^{-1}(A)$ such that $c \leq a,b$.
This is an equivalence relation and the equivalence classes are prime filters.
Define the arrow $\rho_{\theta}$ from  $\mathsf{C}(T)$ to $\mathsf{C}(S)$ by $(A,B) \in \rho_{\theta}$ iff
$A \subseteq \theta^{-1}(B)$  where $B \in \mathsf{C}(T)$.
Then $\rho_{\theta}$ is a proper, continuous, covering, relational functor.
Our contravariant functor therefore does the following: on objects $S$ it does $S \mapsto \mathsf{C}(S)$
and on homomorphisms $\theta \colon S \rightarrow T$ it does $\theta \mapsto \rho_{\theta}$.

We now define the contravariant functor from  $\mathscr{C}$ to $\mathscr{R}$.
We describe it on objects first.
Let $C$ be a Boolean category.
A subset $X \subseteq C$ is called a {\em local bisection}
if $a,b \in X$ and $\mathbf{d}(a) = \mathbf{d}(b)$ implies that $a = b$, and dually.
Under subset multiplication, if $A$ and $B$ are compact-open local bisections
then $AB$ is a compact-open local bisection.
Denote by $\mathsf{KB}(C)$ the set of all compact-open local bisections on $C$.
If $A \in \mathsf{KB}(C)$ define $A^{\ast} = \{\mathbf{d}(a) \colon a \in A\}$
and $A^{+} = \{\mathbf{r}(a) \colon a \in A\}$.
Then $\mathsf{KB}(C)$ is a Boolean restriction monoid whose set of projections is
just the set of all clopen subsets of the Boolean space $C_{o}$.
Before we consider the effect of our contravariant functor on relational functors,
we shall need the following result.

\begin{theorem}\label{them:epic}\mbox{}
\begin{enumerate}

\item Let $S$ be a Boolean restriction monoid.
Then there is an isomorphism $S \cong \mathsf{KB}(\mathsf{C}(S))$ given by the map
$a \mapsto X_{a}$.

\item Let $C$ be a Boolean category.
Then there is an isomorphism $C \cong \mathsf{C}(\mathsf{KB}(C))$ of Boolean categories
given by the map $x \mapsto F_{x}$, where $F_{x}$ is the set of all compact-open local bisections
containing $x$.

\end{enumerate}
\end{theorem}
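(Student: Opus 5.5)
We prove both parts by following the classical proof of Stone duality, using the facts about prime filters recalled above: prime filters are ultrafilters, every prime filter is a `coset' $A = (a\mathbf{d}(A))^{\uparrow}$, and prime filters of $S$ correspond to prime filters of $\mathsf{Proj}(S)$.

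For (1), define $\alpha(a) = X_{a}$. We first check that $X_{a}$ is a compact-open local bisection. It is open by definition of the topology. It is a local bisection because, if $A, B \in X_{a}$ with $\mathbf{d}(A) = \mathbf{d}(B)$, then $a \in A \cap B$, and the coset property forces $A = B$; the dual case is the same.

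For compactness, observe that $\mathbf{d}$ restricted to $X_{a}$ is a bijection onto $X_{a^{\ast}} \subseteq \mathsf{C}(S)_{o}$. Its inverse is $E \mapsto (aE)^{\uparrow}$, and both maps are continuous since they carry basic sets $X_{b}$ to basic sets. Now $X_{a^{\ast}}$ corresponds to a basic clopen set of the Stone space of $\mathsf{Proj}(S)$, so it is compact, and hence $X_{a}$ is compact.

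Next we verify that $\alpha$ is a homomorphism of Boolean restriction monoids. We need $X_{a}X_{b} = X_{ab}$, $X_{a}^{\ast} = X_{a^{\ast}}$, $X_{a}^{+} = X_{a^{+}}$, $X_{0} = \varnothing$, and $X_{a \vee b} = X_{a} \cup X_{b}$ for compatible $a, b$.
\begin{itemize}
\item The join identity uses primeness together with upward closure.
\item The inclusion $X_{a}X_{b} \subseteq X_{ab}$ is immediate.
\item For the reverse inclusion, given $C \ni ab$, put $B = (b\mathbf{d}(C))^{\uparrow}$ and $A = (a\mathbf{r}(B))^{\uparrow}$. We must check that $\mathbf{r}(B)$ contains $a^{\ast}$, using $(ab)^{\ast} \le b^{\ast}$ and (A6), and that $A \cdot B = C$ by the coset property.
\end{itemize}

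Injectivity of $\alpha$ reduces to the projection case. If $X_{a} = X_{b}$, then $X_{a^{\ast}} = X_{b^{\ast}}$, so $a^{\ast} = b^{\ast}$ by classical Stone duality. We then compare $a$ and $b$ on each prime filter of $\mathsf{Proj}(S)$ to get $a = b\,a^{\ast}$, and symmetrically; if needed we pass through $X_{ab^{\ast}}$-style meets.

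Surjectivity is the main obstacle. Let $U$ be a compact-open local bisection. Cover $U$ by basic sets $X_{a}$ contained in $U$ and extract a finite subcover $X_{a_{1}}, \dots, X_{a_{n}}$. Since $U$ is a local bisection, $X_{a_{i}} \cup X_{a_{j}}$ is a local bisection. We then show that whenever $X_{a} \cup X_{b}$ is a local bisection, $a$ and $b$ can be replaced by compatible elements giving the same sets. Concretely, set $e = a^{\ast} b^{\ast}$ and show that $ae = be$ via injectivity (their $X$'s agree), and similarly on the range side. After this adjustment we form a join and conclude $U = X_{a_{1} \vee \cdots \vee a_{n}}$.

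For (2), define $\gamma(x) = F_{x}$. We show $F_{x}$ is a prime filter:
\begin{itemize}
\item It is proper, since $\varnothing$ does not contain $x$.
\item It is a filter, because the intersection of two compact-open local bisections containing $x$ contains a compact-open one containing $x$, as the étale space has a basis of compact-open bisections.
\item It is prime, because $x \in U \cup V$ gives $x \in U$ or $x \in V$.
\end{itemize}

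For injectivity, Hausdorffness of $C_{o}$ together with étaleness separates points by compact-open bisections.

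For surjectivity, given a prime filter $A$, the classical part gives $\mathbf{d}(A) = F_{e}$ for some $e \in C_{o}$. Pick $U \in A$; it contains a unique $x$ with $\mathbf{d}(x) = e$. Then $F_{x} \supseteq (U\mathbf{d}(A))^{\uparrow} = A$, and since $A$ is an ultrafilter, equality holds.

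Finally we check the structure. Compatibility with $\mathbf{d}$, $\mathbf{r}$ and products is a direct check using $F_{x}F_{y} \subseteq F_{xy}$ and maximality. The map is a homeomorphism because it sends each basic compact-open $U$ to $X_{U}$.
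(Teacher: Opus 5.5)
The paper gives no proof of this theorem. It is quoted inside a sketch of non-commutative Stone duality, with details deferred to the literature, so the only thing to compare against is the list of facts stated there: prime filters are ultrafilters, $A = (a\mathbf{d}(A))^{\uparrow}$, and identity prime filters correspond to prime filters of $\mathsf{Proj}(S)$. Your outline is the classical Stone-duality argument built from those facts, and most of it checks out. $X_{a}$ is a compact-open local bisection. Your choice of $B$ and $A$ does give $X_{ab} \subseteq X_{a}X_{b}$, using $(ab)^{\ast} \leq b^{\ast}$ and $(b(ab)^{\ast})^{+} = (a^{\ast}b)^{+} \leq a^{\ast}$. Part (2) is fine as written.

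The step you have not actually argued is injectivity in (1). It is load-bearing, because your surjectivity argument uses it to turn $X_{ae} = X_{be}$ into $ae = be$.

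Comparing $a$ and $b$ on each prime filter gives only local information. If $F$ is a prime filter of $\mathsf{Proj}(S)$ containing $a^{\ast} = b^{\ast}$, then $(aF)^{\uparrow} \in X_{a} = X_{b}$ contains $b$. So $af \leq b$ for some $f \in F$, that is, $af = b(af)^{\ast} = bf$. Passing from this to a global identity is the whole content of injectivity. It requires compactness together with finite joins, and you invoke neither; the remark about ``$X_{ab^{\ast}}$-style meets'' does not supply it.

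To close the gap, proceed as follows.
\begin{enumerate}
\item Let $I = \{f \in \mathsf{Proj}(S) \colon af = bf\}$. It is closed under finite joins by distributivity, and it meets every prime filter containing $a^{\ast}$.
\item The basic clopen set determined by $a^{\ast}$ in the Stone space of $\mathsf{Proj}(S)$ is compact. This yields $f = f_{1} \vee \cdots \vee f_{n} \in I$ lying in every prime filter that contains $a^{\ast}$.
\item Classical Stone duality then gives $a^{\ast} \leq f$, hence $a = aa^{\ast}f = af = bf$.
\item Multiplying on the right by $a^{\ast} = b^{\ast}$ gives $a = ba^{\ast} = b$.
\end{enumerate}
The same argument with $I = \{f \leq a^{\ast} \colon af \leq b\}$ shows that $X_{a} \subseteq X_{b}$ implies $a \leq b$.

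A smaller point concerns surjectivity: nothing there needs to be ``replaced''. The identity $ae = be$ with $e = a^{\ast}b^{\ast}$ is literally $ab^{\ast} = ba^{\ast}$, so together with the dual identity you already have $a \sim b$. For $n > 2$ it is cleanest to induct, comparing $a_{1} \vee \cdots \vee a_{k}$ with $a_{k+1}$. This works because $X_{a_{1} \vee \cdots \vee a_{k}} \cup X_{a_{k+1}} \subseteq U$ is still a local bisection.
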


Let $\rho \colon \mathsf{C}(T) \rightarrow \mathsf{C}(S)$ be 
a proper, continuous, covering, relational functor.
A corresponding homomorphism $\theta_{\rho} \colon \mathsf{KB}(\mathsf{C}(S)) \rightarrow \mathsf{KB}(\mathsf{C}(T))$
is defined by
$\rho^{-1}(X_{s}) = X_{\theta_{\rho}(s)}$.
(This set is compact and open since $\rho$ is proper and continuous.
The set $\rho^{-1}(X_{s})$ is a local bisection,
since its elements are prime filters.)
Our contravariant functor therefore does the following: on objects $C$ it does $C \mapsto \mathsf{KB}(C)$
and on a proper, continuous, covering, relational functor
$\rho \colon C \rightarrow B$ it does $\rho \mapsto \theta_{\rho}$.
That we really have a contravariant functor in this direction  depends on the following result, which is easy to prove.

\begin{lemma} Let $\rho$ be a covering relational functor.
If $(ab,d) \in \rho$ then there exists $d_{1}$ and $d_{2}$ such that $d = d_{1}d_{2}$
and $(a,d_{1}),(b,d_{2}) \in \rho$.
\end{lemma}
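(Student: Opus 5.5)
We use the axioms directly: (RF2) to pass from $\rho$-related arrows to $\rho$-related identities, (CRF2) to produce a candidate factor $d_{2}$, and (CRF1) to make the composite coincide with $d$. Here $\rho \subseteq C \times D$, $(ab,d) \in \rho$ and $\exists ab$, so $\mathbf{d}(a) = \mathbf{r}(b)$.

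First, by (RF2) applied to $(ab,d)$ we get $(\mathbf{d}(ab),\mathbf{d}(d)) = (\mathbf{d}(b),\mathbf{d}(d)) \in \rho$. Now apply the dual form of (CRF2) to the pair of identities $(\mathbf{d}(b),\mathbf{d}(d)) \in \rho$ and the arrow $b$ with $\mathbf{d}(b)$ as its domain. The clause as printed uses $\mathbf{d}(a) = f$ and yields $\mathbf{d}(b) = e$; whichever of the two versions is literally needed, the ``and dually'' covers it. This gives an element $d_{2} \in D$ with $\mathbf{d}(d_{2}) = \mathbf{d}(d)$ and $(b,d_{2}) \in \rho$.

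Next we need $d_{1}$. Applying (RF2) to $(b,d_{2})$ gives $(\mathbf{r}(b),\mathbf{r}(d_{2})) = (\mathbf{d}(a),\mathbf{r}(d_{2})) \in \rho$. Applying (CRF2) to this pair and to $a$, whose domain is $\mathbf{d}(a)$, gives $d_{1}$ with $\mathbf{d}(d_{1}) = \mathbf{r}(d_{2})$ and $(a,d_{1}) \in \rho$. Thus $d_{1}d_{2}$ is defined in $D$, and (RF3) applied to $(a,d_{1})$ and $(b,d_{2})$ gives $(ab,d_{1}d_{2}) \in \rho$.

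Finally, both $(ab,d)$ and $(ab,d_{1}d_{2})$ lie in $\rho$, and $\mathbf{d}(d_{1}d_{2}) = \mathbf{d}(d_{2}) = \mathbf{d}(d)$. By (CRF1), $d = d_{1}d_{2}$, which is the required factorization.

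The only delicate point is bookkeeping: choosing whether to lift $b$ or $a$ first so that (CRF1) applies with matching domains. Lifting $b$ first, anchored at $\mathbf{d}(d)$, is what makes the domains agree; if we started from $a$ we would instead need the dual of (CRF1) with matching codomains, which also works. There is no genuine obstacle beyond reading (CRF2) in the correct orientation.
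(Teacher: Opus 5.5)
Your proof is correct: lifting $b$ via (CRF2) at $\mathbf{d}(d)$, then lifting $a$ at $\mathbf{r}(d_{2})=\mathbf{d}(a)$, composing with (RF3), and concluding $d=d_{1}d_{2}$ with (CRF1) is the routine argument the paper has in mind when it calls the lemma ``easy to prove'' (the paper gives no written proof). One small slip: both lifts use (CRF2) exactly as printed, not its dual, so your remark that the ``dual form'' is needed in the second paragraph is mislabelled.
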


Parts (1) and (2) of the following result are proved in \cite{KL} but we give a direct proof here.

\begin{proposition}\label{prop:fury} \mbox{}
\begin{enumerate}

\item If $S$ is a Boolean ample monoid then $\mathsf{C}(S)$ is cancellative.

\item If $C$ is a Boolean cancellative category then
then $\mathsf{KB}(C)$ is ample.

\item If $S$ is a Boolean inverse monoid then $\mathsf{C}(S)$ is a groupoid.

\item If $C$ is a Boolean groupoid then
then $\mathsf{KB}(C)$ is inverse.

\end{enumerate}
\end{proposition}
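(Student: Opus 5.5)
Each of the four parts is handled directly, using the coset description of prime filters, $A = (a\mathbf{d}(A))^{\uparrow}$ for any $a \in A$, and the consequence that two prime filters with the same domain (or codomain) which meet must be equal.

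For (1), let $S$ be Boolean ample and suppose $A\cdot B = A\cdot C$ in $\mathsf{C}(S)$. Then $\mathbf{d}(B) = \mathbf{d}(A\cdot B) = \mathbf{d}(A \cdot C) = \mathbf{d}(C)$. It therefore suffices to show $B \cap C \neq \varnothing$. Choose $a \in A$, $b \in B$, $c \in C$. Since $ab$ and $ac$ lie in the filter $A\cdot B$, there is $x \in A\cdot B$ with $x \le ab, ac$. Write $x = abe = ace$ with $e = x^{\ast}$ a projection. Axiom (A7) gives $a^{\ast}be = a^{\ast}ce$.

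We have $a^{\ast} \in \mathbf{d}(A) = \mathbf{r}(B)$ and $a^{\ast} \in \mathbf{r}(C)$. We also need $e \in \mathbf{d}(B)$. To get it, we first show $\mathbf{d}(A\cdot B) = \mathbf{d}(B)$, using (A5)/(A6) to compute $(ab)^{\ast} = (a^{\ast}b)^{\ast} \le b^{\ast}$ together with primeness/maximality. Then $e \in \mathbf{d}(B)$, and hence $a^{\ast}be \in B$ because $B = (b\mathbf{d}(B))^{\uparrow}$ and $a^{\ast} \in \mathbf{r}(B)$. Here one shows $fB \subseteq B$ for projections $f \in \mathbf{r}(B)$, using (A6) to rewrite $fb = b(fb)^{\ast}$. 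Symmetrically $a^{\ast}ce \in C$. So the common element $a^{\ast}be = a^{\ast}ce$ lies in $B \cap C$, and thus $B = C$. Right cancellation is dual, using the other half of (A7).

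For (2), let $C$ be Boolean cancellative and $A, B, X \in \mathsf{KB}(C)$ with $XA = XB$. We must show $X^{\ast}A = X^{\ast}B$. Take $a \in A$ with $\mathbf{r}(a) = \mathbf{d}(x)$ for some $x \in X$. Then $xa \in XB$, so $xa = x'b$ with $x' \in X$ and $b \in B$. Since $X$ is a local bisection and $\mathbf{r}(x) = \mathbf{r}(x')$, we get $x = x'$. Left cancellation in $C$ then gives $a = b$, so $X^{\ast}A \subseteq X^{\ast}B$, and by symmetry equality holds. The other half of (A7) is dual.

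For (3), let $S$ be a Boolean inverse monoid and $A$ a prime filter. Put $A^{-1} = \{a^{-1} : a \in A\}$; this is a prime filter, since inversion is an order automorphism that preserves compatible joins. We check $A^{-1}\cdot A = \mathbf{d}(A)$. Indeed $a^{-1}a = a^{\ast} \in A^{-1}A$, so $A^{-1}\cdot A$ meets $\mathbf{d}(A)$. Both are prime filters with the same domain, namely $\mathbf{d}(A)$, so they are equal. Dually $A\cdot A^{-1} = \mathbf{r}(A)$.

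For (4), let $C$ be a Boolean groupoid and $X \in \mathsf{KB}(C)$. Then $X^{-1}$ is a compact-open local bisection, because inversion is a homeomorphism in an étale groupoid. Moreover $XX^{-1}X = X$ and $X^{-1}XX^{-1} = X^{-1}$, so $\mathsf{KB}(C)$ is regular. Its idempotents are exactly the clopen subsets of $C_{o}$, which commute. Hence $\mathsf{KB}(C)$ is inverse.

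The main obstacle is the bookkeeping in (1): proving $\mathbf{d}(A\cdot B) = \mathbf{d}(B)$ and $fB \subseteq B$ for $f \in \mathbf{r}(B)$ from the restriction axioms, so that the element produced by (A7) is placed in both $B$ and $C$.
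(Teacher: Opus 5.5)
Your proposal is correct. Part (2) is exactly the paper's argument: the bisection property of $X$ forces $x = x'$, and left cancellation in $C$ then gives $a = b$.

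Part (1) uses the same key step as the paper. You apply (A7) to an equation $au = av$ extracted from $A\cdot B = A\cdot C$, then absorb $a^{\ast}\in\mathbf{d}(A)=\mathbf{r}(B)=\mathbf{r}(C)$ back into the filters. The bookkeeping differs, though:
\begin{itemize}
\item The paper fixes an arbitrary $b\in B$, takes $a_{1}c\le ab$ and a common lower bound $a_{2}\le a,a_{1}$ in $A$, and applies (A7) to $a(a_{2}^{\ast}c)=a(b(a_{2}c)^{\ast})$ to get $b\in C$. It obtains the reverse inclusion by symmetry.
\item You read off $\mathbf{d}(B)=\mathbf{d}(C)$ from the category axioms, so you only need one element of $B\cap C$. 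You build it from a common lower bound $x=abe=ace$ of $ab$ and $ac$, and finish with the coset property of prime filters.
\end{itemize}
So your route replaces the second inclusion with the coset lemma, which the paper states but does not use in its own proof of (1).

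For (3) and (4) the paper gives no argument; it only points to Chapter 9 of Lawson's book. Your direct proofs are therefore a genuine addition, and they are sound. Two small points should be stated explicitly:
\begin{itemize}
\item $A^{-1}\cdot A$ is defined because $\mathbf{d}(A^{-1})=((A^{-1})^{\ast})^{\uparrow}=(A^{+})^{\uparrow}=\mathbf{r}(A)$.
\item An idempotent $E=EE$ of $\mathsf{KB}(C)$ lies in $C_{o}$. Each $e\in E$ factors as $e=e_{1}e_{2}$ with $e_{1}=e_{2}=e$ by the bisection property, so $e=ee$ is an identity of the groupoid.
\end{itemize}
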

\begin{proof} (1) Suppose that $A \cdot B = A \cdot C$ where $A,B,C$ are prime filters.
We prove that $B = C$. The dual result follows by symmetry.
In fact, we shall prove $B \subseteq C$.
The reverse inclusion then follows by symmetry.
Let $b \in B$.
Then $b^{+} \in \mathbf{r}(B)$.
So, $b^{+} \in \mathbf{d}(A)$, since $A \cdot B$ is defined in the category.
There is therefore $a \in A$ such that $a^{\ast} \leq b^{+}$.
But $ab \in A \cdot B$ and so $ab \in A \cdot C$.
Therefore $a_{1}c \leq ab$ where $a_{1} \in A$ and $c \in C$.
Since $a,a_{1} \in A$ there exists $a_{2} \leq a,a_{1}$ where $a_{2} \in A$.
Thus $a_{2}c \leq ab$.
But $a_{2} = aa_{2}^{\ast}$.
Thus $aa_{2}^{\ast}c = ab(a_{2}c)^{\ast}$.
We are working in an ample semigroup,
so $a^{\ast}a_{2}^{\ast}c = a^{\ast}b(a_{2}c)^{\ast}$.
Now, $a^{\ast}, a_{2}^{\ast} \in \mathbf{r}(C)$.
Thus $a^{\ast}a_{2}^{\ast}c \in \mathbf{r}(C)\mathbf{r}(C)C \subseteq C$.
It follows that  $a^{\ast}b(a_{2}c)^{\ast} \in C$.
Thus $b \in C$, as required.

(2) It is enough to check that Axiom (A7) holds.
Suppose that $AB = AC$ where $A,B,C$ are compact-open local bisections.
We prove that $A^{\ast}B = A^{\ast}C$.
Let $\mathbf{d}(a)b \in A^{\ast}B$ where $a \in A$.
Then $ab \in AB$.
By assumption, $ab = a_{1}c_{1}$ where $a_{1} \in A$ and $c_{1} \in C$.
Clearly, $\mathbf{r}(a) = \mathbf{r}(a_{1})$.
But $A$ is a local bisection.
Thus $a = a_{1}$.
We therefore have that $ab = ac_{1}$.
The category $C$ is cancellative, thus $b = c_{1}$.
It follows that $\mathbf{d}(a)b \in A^{\ast}C$, as required.

The proofs of parts (3) and (4) may be found in \cite[Chapter 9]{Lawson2025}. 
\end{proof}

\section{Etale groupoids of fractions}

We say that a category $C$ is {\em right reversible} if for all elements $a,b \in C$ such that $\mathbf{d}(a) = \mathbf{d}(b)$
we have that $Ca \cap Cb \neq \varnothing$;
we are using the same terminology as \cite{CP} since we regard small categories as generalizations of monoids.
The following result is proved in \cite{GZ}. 

\begin{theorem}\label{them:four}
Let $C$ be a right reversible cancellative category.
Then there is a groupoid $G$ such that
$C$ is embedded in $G$ by a functor $\iota$ such that
$\iota (C)$ is a wide subcategory of $G$
and 
$\iota (C)^{-1}\iota (C) = G$.
The groupoid $G$ is unique with these properties.
\end{theorem}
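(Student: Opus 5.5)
Since $C$ is a category, I would prove Theorem~\ref{them:four} by adapting the Ore construction for right reversible cancellative monoids \cite{CP} to the many-object setting, working with formal fractions $a^{-1}b$.

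\emph{Construction.} Let $P$ be the set of pairs $(a,b) \in C \times C$ with $\mathbf{r}(a) = \mathbf{r}(b)$; such a pair is meant to represent $a^{-1}b$, an arrow from $\mathbf{d}(b)$ to $\mathbf{d}(a)$. Define $(a,b) \equiv (c,d)$ iff $\mathbf{d}(a) = \mathbf{d}(c)$, $\mathbf{d}(b) = \mathbf{d}(d)$, and there exist $u,v$ with $ua = vc$ and $ub = vd$.

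\emph{Equivalence relation.} Reflexivity and symmetry are clear. For transitivity, suppose $ua = vc$, $ub = vd$ and $u'c = v'e$, $u'd = v'f$. We have $\mathbf{d}(v) = \mathbf{d}(u') = \mathbf{r}(c)$, so right reversibility gives $x,y$ with $xv = yu'$. Then $xua = xvc = yu'c = yv'e$, and similarly $xub = yv'f$, which proves transitivity. Cancellativity is not even needed at this step.

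\emph{Multiplication.} Given classes $[a,b]$ and $[c,d]$ with $\mathbf{d}(b) = \mathbf{d}(c)$, right reversibility gives $x,y$ with $xb = yc$. Define $[a,b][c,d] = [xa, yd]$, mirroring the identity $a^{-1}b\,c^{-1}d = a^{-1}x^{-1}y\,d = (xa)^{-1}(yd)$.

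\emph{Well-definedness: the main obstacle.} First, the product must not depend on the choice of $x,y$. If also $x'b = y'c$, I choose $p,q$ with $px = qx'$, using that $\mathbf{d}(x) = \mathbf{r}(b) = \mathbf{d}(x')$. Then $pyc = pxb = qx'b = qy'c$, so right cancellation gives $py = qy'$. Hence $p(xa) = q(x'a)$ and $p(yd) = q(y'd)$, so $[xa,yd] = [x'a,y'd]$. Second, the product must not depend on the representatives. It suffices to check that replacing $(a,b)$ by $(ua,ub)$ leaves the product unchanged, and likewise for $(c,d)$, because $\equiv$ is generated by such moves: $(a,b) \equiv (c,d)$ means exactly that $(ua,ub) = (vc,vd)$. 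For $(ua,ub)$, pick $x',y'$ with $x'ub = y'c$; then $x'u, y'$ is an admissible choice for the original pair, so the answer agrees. For $(uc,ud)$, pick $x,y$ with $xb = yuc$; then $x, yu$ is admissible for $(c,d)$, and it produces $[xa, yud]$, which is the same. This bookkeeping, combined with the independence of choices, is the crux.

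\emph{Groupoid structure and embedding.} Associativity follows by applying reversibility twice and using independence of choices. The identities are $[e,e]$ for $e \in C_{o}$, and the inverse of $[a,b]$ is $[b,a]$, since $[b,a][a,b] = [b,b] = [\mathbf{d}(b),\mathbf{d}(b)]$, taking $x = y = \mathbf{r}(a)$. Define $\iota(c) = [\mathbf{r}(c), c]$. This is a functor, it is bijective on identities (so $\iota(C)$ is wide), and $[a,b] = \iota(a)^{-1}\iota(b)$, so $\iota(C)^{-1}\iota(C) = G$. For injectivity, $\iota(c) = \iota(c')$ yields $u = v$ together with $uc = uc'$, and left cancellation gives $c = c'$.

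\emph{Uniqueness.} Let $\kappa \colon C \to H$ be another such embedding. Define $\Phi([a,b]) = \kappa(a)^{-1}\kappa(b)$; it is well defined because $ua = vc$ and $ub = vd$ imply $\kappa(a)^{-1}\kappa(b) = \kappa(ua)^{-1}\kappa(ub)$. It is a functor, since $xb = yc$ gives $\kappa(b)\kappa(c)^{-1} = \kappa(x)^{-1}\kappa(y)$, and it is surjective. For injectivity, suppose $\kappa(a)^{-1}\kappa(b) = \kappa(c)^{-1}\kappa(d)$. Choose $u,v$ with $ua = vc$, using $\mathbf{d}(a) = \mathbf{d}(c)$. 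Then $\kappa(ub) = \kappa(vd)$, so $ub = vd$ and $[a,b] = [c,d]$. Hence $G \cong H$.
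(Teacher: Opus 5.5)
Your proposal is correct. The paper gives no argument of its own here, only the citation to \cite{GZ}. The Gabriel--Zisman calculus of left fractions, specialized to the class of all arrows of $C$, is exactly your Ore-type construction:
right reversibility is their Ore condition;
right cancellation makes their cancellability axiom trivial, and that is where you use it, to show the product does not depend on the choice of $x,y$;
left cancellation is what makes the canonical functor faithful, which is your injectivity of $\iota$.

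What your self-contained version adds is that it makes these dependencies explicit, and it proves uniqueness directly from $\iota(C)^{-1}\iota(C)=G$ rather than through the universal property of the localization. Two points you only gesture at do check out:
\begin{enumerate}
\item Associativity: if $xb=yc$ and $x'yd=y'e$, then $(x'y,y')$ and then $(x'x,\mathbf{r}(x'))$ are admissible choices for the other bracketing, and both bracketings give $[x'xa,y'f]$.
\item Uniqueness over $C$: your $\Phi$ satisfies $\Phi\circ\iota=\kappa$, so $G$ is unique up to an isomorphism compatible with the embeddings.
\end{enumerate}
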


The groupoid $G$ is called a {\em groupoid of fractions} of $C$.
For the remainder of this section, to ease notation we shall always regard $C$ as a subcategory of $G$.



Having dealt with matters algebraical, we now deal with matters topological.

The first result is immediate because $\mathbf{d}$ and $\mathbf{r}$ are local homeomorphisms.

\begin{lemma}
In any \'etale category, the space of identities is open.
\end{lemma}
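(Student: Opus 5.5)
The plan is to express $C_{o}$ as a union of open sets, each an open neighbourhood of an identity contained in $C_{o}$. Let $C$ be an étale category and fix an identity $e \in C_{o}$. Since $\mathbf{d}$ is a local homeomorphism, there is an open set $U \subseteq C$ with $e \in U$ such that $\mathbf{d}|_{U} \colon U \rightarrow \mathbf{d}(U)$ is a homeomorphism onto an open subset $\mathbf{d}(U)$ of $C$. Here I am regarding $\mathbf{d}$ as a map $C \rightarrow C$ whose image is $C_{o}$, consistent with identifying objects with identities.

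The key observation is that the identities themselves furnish a continuous section of $\mathbf{d}$ over $C_{o}$: for every $f \in C_{o}$ we have $\mathbf{d}(f) = f$. Consider the set $V = U \cap \mathbf{d}(U)$, which is open in $C$ and contains $e$, because $\mathbf{d}(e) = e$ and $e \in U$. By continuity of $\mathbf{d}$, the set $W = V \cap \mathbf{d}|_{U}^{-1}(V)$ is also an open neighbourhood of $e$.

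We claim $W \subseteq C_{o}$. Take $x \in W$ and put $f = \mathbf{d}(x)$. Then $f \in V \subseteq U$, $f$ is an identity, and $\mathbf{d}(f) = f = \mathbf{d}(x)$. Both $x$ and $f$ lie in $U$, and $\mathbf{d}|_{U}$ is injective, so $x = f \in C_{o}$. Hence every identity has an open neighbourhood contained in $C_{o}$, and $C_{o}$ is open.

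The only delicate point is getting injectivity of $\mathbf{d}$ on a single open set that contains both $x$ and $\mathbf{d}(x)$; that is why I shrink to $U \cap \mathbf{d}(U)$ before pulling back. Everything else is just the definition of a local homeomorphism, and the argument works equally with $\mathbf{r}$ in place of $\mathbf{d}$.
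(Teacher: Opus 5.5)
Your proof is correct under the convention you adopt, but there the injectivity step is redundant, and the paper's one-line proof already sits inside yours. If $\mathbf{d}\colon C\to C$ is a local homeomorphism, then $\mathbf{d}(U)$ is an open subset of $C$ that contains $e=\mathbf{d}(e)$ and lies in $C_{o}$, so it is already the required neighbourhood. Equivalently, local homeomorphisms are open maps and $C_{o}=\mathbf{d}(C)$; this is what the paper means by ``immediate''. In particular your claim $W\subseteq C_{o}$ holds trivially, since $W\subseteq V\subseteq \mathbf{d}(U)\subseteq C_{o}$.

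Combining continuity of $\mathbf{d}$ with injectivity on $U$ only becomes necessary under the weaker reading of \'etale, where $\mathbf{d}$ maps small open sets homeomorphically onto sets open merely in the subspace $C_{o}$. Under that reading the lemma is not immediate, and your $V=U\cap\mathbf{d}(U)$ is only open in $C_{o}$, so intersecting with it spoils openness of $W$ in $C$. Take instead $W=U\cap\mathbf{d}^{-1}(U\cap C_{o})$, which is open in $C$ by continuity of $\mathbf{d}$; your injectivity argument then shows $W\subseteq C_{o}$. That version proves the lemma from the weaker hypothesis and shows the two readings of \'etale agree. The paper's argument is shorter but relies on the images $\mathbf{d}(U)$ being open in $C$ itself.
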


Our next result tells us that we can choose the basis of an \'etale category to have a special form.
We shall refine this result later when we deal with Boolean categories.

\begin{lemma}\label{lem:basis-open-local-bisections} 
Let $C$ be an \'etale category.
Then the open local bisections form a basis for the topology on $C$.
\end{lemma}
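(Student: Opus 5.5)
To prove Lemma~\ref{lem:basis-open-local-bisections}, I will show two things: the open local bisections cover $C$, and every open set is a union of open local bisections. Both reduce to a single claim. Namely, every $x \in C$ and every open $U$ containing $x$ admit an open local bisection $V$ with $x \in V \subseteq U$. Given this claim, the family of open local bisections is a basis, since each open set is the union of the open local bisections it contains. No separate check of the intersection axiom is needed, because the claim applied to $U_1 \cap U_2$ supplies the required refinement.

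For the claim, fix $x \in U$. Because $C$ is \'etale, $\mathbf{d}$ is a local homeomorphism, so there is an open $W_1 \ni x$ on which $\mathbf{d}$ is injective (indeed a homeomorphism onto an open subset of $C_{o}$). Likewise there is an open $W_2 \ni x$ on which $\mathbf{r}$ is injective. I then set
\[
V = U \cap W_1 \cap W_2 .
\]
This is an open set containing $x$ and contained in $U$. If $a,b \in V$ and $\mathbf{d}(a) = \mathbf{d}(b)$, then $a = b$, because $a,b \in W_1$ and $\mathbf{d}$ is injective on $W_1$. The dual statement for $\mathbf{r}$ holds by the same argument using $W_2$. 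Hence $V$ is an open local bisection.

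The only point needing any care is the definition of a local homeomorphism. Each point has an open neighbourhood mapped homeomorphically onto an open set, and in particular mapped injectively. Injectivity is preserved on passing to open subsets, so intersecting the two neighbourhoods with $U$ keeps both injectivity properties. Beyond this, the argument is routine, and I expect no genuine obstacle.
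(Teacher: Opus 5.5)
Your proposal is correct and follows the paper's proof: take neighbourhoods of the point on which $\mathbf{d}$ and $\mathbf{r}$ are injective, and intersect them with the given open set to get an open local bisection inside it. Your extra remark, that every open set being a union of these open sets is already enough for them to form a basis, is valid and makes explicit a step the paper leaves implicit.
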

\begin{proof} Let $U$ be any non-empty open set in $C$ and let $a \in U$.
Then there is an open neighbourhood $V$ of $a$ on which $\mathbf{d}$ is a homeomorphism.
Likewise, there is an open neighbourhood $W$ of $a$ on which $\mathbf{r}$ is a homeomorphism.
The set $U \cap V \cap W$ is an open set containing $a$ inside $U$ which is a local bsection.
This proves that the open local bisections form a basis for $C$.
\end{proof}

If $C$ is a category and $X \subseteq C$, then we write $\mathbf{d}(X) = \{\mathbf{d}(x) \colon x \in X \}$, and dually.

\begin{lemma}\label{lem:star-compact} Let $C$ be an  \'etale topological category.
Then the open local bisection $A$ is compact iff $\mathbf{d}(A)$ is compact, and dually.
\end{lemma}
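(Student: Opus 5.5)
The plan is to show that the restriction of $\mathbf{d}$ to $A$ is a homeomorphism from $A$ onto $\mathbf{d}(A)$. Compactness is preserved by homeomorphisms, so this settles both directions of the claim at once. The dual statement, for $\mathbf{r}$, then follows by the symmetric argument.

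First, the map is a bijection. Since $A$ is a local bisection, $\mathbf{d}(a)=\mathbf{d}(b)$ with $a,b\in A$ forces $a=b$, so $\mathbf{d}|_{A}\colon A\rightarrow \mathbf{d}(A)$ is a continuous bijection. Continuity holds because $\mathbf{d}$ is continuous on $C$. Second, the map is open. A local homeomorphism is an open map, so $\mathbf{d}$ sends open subsets of $C$ to open subsets of $C_{o}$, or of $C$. Now let $U\subseteq A$ be open in $A$. Since $A$ is open in $C$, the set $U$ is open in $C$. Hence $\mathbf{d}(U)$ is open, and therefore open in the subspace $\mathbf{d}(A)$, which is itself open. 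Thus $\mathbf{d}|_{A}$ is a continuous open bijection, that is, a homeomorphism onto $\mathbf{d}(A)$.

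The conclusion then follows directly. If $A$ is compact, then $\mathbf{d}(A)$ is compact as the continuous image of a compact set. Conversely, if $\mathbf{d}(A)$ is compact, then $A=(\mathbf{d}|_{A})^{-1}(\mathbf{d}(A))$ is homeomorphic to it, so $A$ is compact.

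The only point needing care is that local homeomorphisms are open maps. For this, cover an open set $U$ by open neighbourhoods $V_{a}$, for $a \in U$, on which $\mathbf{d}$ restricts to a homeomorphism onto an open set. Each $\mathbf{d}(U\cap V_{a})$ is then open, and $\mathbf{d}(U)$ is their union. This is routine, so I expect no real obstacle.
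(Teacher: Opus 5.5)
Your proof is correct, and it packages the argument differently from the paper. The paper proves the nontrivial direction by a direct cover chase. Given an open cover $\{U_i\}$ of $A$, it covers $\mathbf{d}(A)$ by the sets $\mathbf{d}(U_i\cap A)$ and extracts a finite subcover. It then uses injectivity of $\mathbf{d}$ on the local bisection $A$ to pull this back to a finite subcover of $A$. The other direction is just continuity of $\mathbf{d}$. You instead prove the stronger structural fact that $\mathbf{d}|_{A}$ is a homeomorphism of $A$ onto the open set $\mathbf{d}(A)$, after which both directions (and the dual for $\mathbf{r}$) are immediate. The ingredients are the same: injectivity from the local bisection property, and openness of $\mathbf{d}$ as a local homeomorphism. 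Your version makes the openness of $\mathbf{d}$ explicit. The paper uses it only tacitly, when it treats the sets $\mathbf{d}(U_i\cap A)$ as an open cover of $\mathbf{d}(A)$. The paper's version, in exchange, works directly with covers and so avoids the general fact that a continuous open bijection is a homeomorphism.
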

\begin{proof} Suppose that $A$ is an open local section and that $\mathbf{d}(A)$ is compact.
We prove that $A$ is compact.
Suppose that $A \subseteq \bigcup_{i \in I}U_{i}$ where $U_{i}$ are open sets of $C$.
Then $A = \bigcup_{i \in I}U_{i} \cap A$.
The sets $U_{i} \cap A$ are open and they are local bisections since they are contained in a local bisection.
It follows that $\mathbf{d}(A) =  \bigcup_{i \in I}\mathbf{d}(U_{i} \cap A)$.
By assumption, $\mathbf{d}(A)$ is compact.
We may therefore write $\mathbf{d}(A) =  \bigcup_{j = 1}^{m} \mathbf{d}(U_{j} \cap A)$, relabelling if necessary.
We claim that  $A \subseteq \bigcup_{i = 1}^{m}U_{i}$. 
Let $x \in A$.
Then $\mathbf{d}(x) \in \mathbf{d}(A)$
and so $\mathbf{d}(x) \in \mathbf{d}(U_{j} \cap A)$ for some $j$.
But $U_{j} \cap A$ is a local section.
Thus there is a unique element $y \in U_{j} \cap A$ such that $\mathbf{d}(y) = \mathbf{d}(x)$.
But $x,y \in A$ which is a local bisection.
It follows that $x = y$ and so $x \in U_{j} \cap A \subseteq U_{j}$.
The proof of the converse follows from the fact that $C$ is \'etale
\end{proof}

We may now prove the following result about Boolean categories; this is the refinement we promised earlier.

\begin{proposition}\label{prop:basis-etale} Let $C$ be a Boolean category.
Then the compact-open local bisections form a basis for the topology on $C$.
\end{proposition}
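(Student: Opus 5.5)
We combine Lemma~\ref{lem:basis-open-local-bisections} with Lemma~\ref{lem:star-compact}, using the fact that $C_{o}$ is a Boolean space. Let $U$ be a non-empty open subset of $C$ and $a \in U$. We must find a compact-open local bisection $B$ with $a \in B \subseteq U$.

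First, by Lemma~\ref{lem:basis-open-local-bisections}, choose an open local bisection $A$ with $a \in A \subseteq U$. We may also take $A$ small enough that $\mathbf{d}$ restricted to $A$ is a homeomorphism onto an open subset of $C_{o}$; this is exactly how the proof of that lemma produces $A$. Since $C$ is \'etale, $\mathbf{d}(A)$ is open in $C_{o}$ and contains $\mathbf{d}(a)$. Because $C_{o}$ is Boolean, its clopen sets form a basis, so there is a clopen, hence compact, set $K$ with $\mathbf{d}(a) \in K \subseteq \mathbf{d}(A)$.

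Now put $B = \{x \in A \colon \mathbf{d}(x) \in K\} = A \cap \mathbf{d}^{-1}(K)$. Since $\mathbf{d}$ is continuous and $K$ is open in $C_{o}$, $B$ is open. It contains $a$, lies inside $U$, and is a local bisection because it is contained in one. Moreover $\mathbf{d}(B) = K$, since $K \subseteq \mathbf{d}(A)$. So $\mathbf{d}(B)$ is compact, and Lemma~\ref{lem:star-compact} shows that $B$ is compact. Hence $B$ is a compact-open local bisection with $a \in B \subseteq U$, and these sets form a basis.

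The only delicate point is the equality $\mathbf{d}(B) = K$ together with the compactness of $K$. One must shrink inside $C_{o}$ and pull back along $\mathbf{d}$, rather than trying to intersect with compact sets in $C$ itself, which need not be Hausdorff. Lemma~\ref{lem:star-compact} removes the need for Hausdorffness of $C$, so I expect no real obstacle beyond checking that $\mathbf{d}(A)$ is open in $C_{o}$. That follows because $\mathbf{d}$ is a local homeomorphism and hence an open map, and $C_{o}$ is open in $C$ by the preceding lemma.
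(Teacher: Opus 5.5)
Your proof is correct and follows essentially the same route as the paper. Both shrink to an open local bisection on which $\mathbf{d}$ is a homeomorphism, choose a clopen neighbourhood of $\mathbf{d}(a)$ inside its image, pull back along $\mathbf{d}$, and invoke Lemma~\ref{lem:star-compact}; your version just makes explicit the set $A \cap \mathbf{d}^{-1}(K)$ and the equality $\mathbf{d}(B) = K$, which the paper leaves implicit.
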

\begin{proof} By Lemma~\ref{lem:basis-open-local-bisections}, 
it is enough to consider the case where $U$ is an open local bisection in $C$.
Let $x \in U$.
There is an open set $V$ containing $x$ such that $V$ and $\mathbf{d}(V)$ are homeomorphic by $\mathbf{d}$.
Thus $U \cap V$ contains $a$ and is homeomorphic to $\mathbf{d}(U \cap V)$ under $\mathbf{d}$.
But the clopen subsets of the space of identities form a basis for the topology on the space of identities.
Thus there is an open set $Y \subseteq U \cap V$ containing $x$ such that $\mathbf{d}(Y)$ is clopen.
But the closed subsets of Hausdorff spaces are compact and so $Y$ is compact by Lemma~\ref{lem:star-compact}. 
\end{proof}

We can now state the main theorem we prove in this section;
we shall prove it later.

\begin{theorem}\label{them:stor} Let $C$ be a right reversible Boolean cancellative category.
Then its groupoid of fractions is a Boolean groupoid.
In addition, if $\beta$ is the basis for $C$ consisting of all compact-open local bisections,
then $\beta^{-1}\beta$ is a basis for the topology on the groupoid of fractions
and consists of compact-open local bisections.
\end{theorem}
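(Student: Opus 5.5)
Let $G$ be the groupoid of fractions of $C$ given by Theorem~\ref{them:four}, with $C$ a wide subcategory of $G$ and $C^{-1}C = G$. Let $\beta$ be the set of compact-open local bisections of $C$, which is a basis by Proposition~\ref{prop:basis-etale}. The plan is to \emph{define} the topology on $G$ by declaring $\beta^{-1}\beta = \{A^{-1}B \colon A,B \in \beta\}$ to be a basis. We then check four things: that this is a basis, that the topology restricts to the original one on $C$ (so $G_{o} = C_{o}$ is the same Boolean space), that $G$ is an étale topological groupoid, and that each $A^{-1}B$ is a compact-open local bisection.

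\textbf{Local bisections.} Each $A^{-1}B$ is a local bisection because $A^{-1}$ and $B$ are local bisections in $G$, and products of local bisections are local bisections. The union of the sets $A^{-1}B$ is all of $G$, since $G = C^{-1}C$ and $\beta$ covers $C$.

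\textbf{Basis intersection axiom; the main obstacle.} Suppose $g \in A^{-1}B \cap A'^{-1}B'$. Write $g = a^{-1}b = a'^{-1}b'$ with $a \in A$, $b \in B$, $a' \in A'$, $b' \in B'$. Since $\mathbf{r}(a) = \mathbf{r}(g) = \mathbf{r}(a')$, we have $\mathbf{d}(a^{-1}) = \mathbf{d}(a'^{-1})$, and right reversibility applied in the opposite sense is what we need here. Note that the Ore condition for $C^{-1}C$ forms, namely $C^{-1}C$ being closed under products, requires: given $x, y$ with a common codomain, there exist $u, v$ with $ux = vy$. I expect the paper's right reversibility convention ($Ca \cap Cb \neq \varnothing$ when $\mathbf{d}(a)=\mathbf{d}(b)$) to be used in exactly this form, after passing to inverses, to get $u, u'$ with $ua = u'a'$. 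Then $ub = u'b'$ by cancellation in $G$, and $g = (ua)^{-1}(ub)$.

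Now choose $U, U' \in \beta$ with $u \in U$, $u' \in U'$. Using continuity of multiplication and Proposition~\ref{prop:basis-etale}, shrink to $D, E \in \beta$ with $ua \in D \subseteq UA \cap U'A'$ and $ub \in E \subseteq UB \cap U'B'$. Shrink further so that $\mathbf{r}(D) = \mathbf{r}(E)$ is a small clopen set of identities, using that $\mathbf{r}$ is a local homeomorphism and clopens form a basis of $C_{o}$. The claim is then that $D^{-1}E \subseteq A^{-1}B \cap A'^{-1}B'$. For an element $d^{-1}e$ with $d = u_{1}a_{1}$ and $e = u_{2}b_{1}$, we need $u_{1} = u_{2}$; this holds because $U$ is a local bisection and the ranges agree. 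Hence $d^{-1}e = a_{1}^{-1}b_{1}$. This bookkeeping is the delicate step. The same kind of argument shows that subsets $A^{-1}B$ with $A, B$ contained in $C_{o}$ give back the original topology on $C$, and that $C$ itself is open in $G$.

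\textbf{Étale, Boolean, and compactness.} On $A^{-1}B$ the map $\mathbf{d}$ factors as $a^{-1}b \mapsto \mathbf{d}(b)$, which is injective on the local bisection $A^{-1}B$. Its image $\mathbf{d}(A^{-1}B)$ is compact-open, namely $\mathbf{d}$ of the compact-open set $B \cap \mathbf{r}^{-1}(\mathbf{r}(A))$. The map $A^{-1}B \to \mathbf{d}(A^{-1}B)$ is open and continuous when basic sets are checked, so $\mathbf{d}$ is a local homeomorphism; $\mathbf{r}$ is handled dually. Inversion swaps $A^{-1}B$ with $B^{-1}A$, so it is continuous. Multiplication is continuous because, by the Ore step above, a product $(A^{-1}B)(A'^{-1}B')$ can be locally rewritten as a basic set. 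Lemma~\ref{lem:star-compact} then gives that each $A^{-1}B$ is compact. Finally, the identity space is $C_{o}$, which is Boolean, so $G$ is a Boolean groupoid.
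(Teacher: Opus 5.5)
\textbf{Overall.} Your route is the paper's. The basis check is Lemma~\ref{lem:kingdoms}, with the same common-left-multiple trick and the same use of $U$ being a local bisection, just run with compact-open bisections from the start. Your compactness argument, via $\mathbf{d}(A^{-1}B)=\mathbf{d}(B\cap\mathbf{r}^{-1}(\mathbf{r}(A)))$ and Lemma~\ref{lem:star-compact}, is tidier than the paper's appeal to $C$ being open in $G$.

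\textbf{The gap: continuity of multiplication.} What you describe is that every point of $(A^{-1}B)(A'^{-1}B')$ has a basic neighbourhood inside that product. That says multiplication is an \emph{open} map. Continuity needs the other direction: given $g_1h_1\in U^{-1}V$, you must produce neighbourhoods of $g_1$ and $h_1$ whose product lies in $U^{-1}V$.

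The paper (Proposition~\ref{prop:main}) gets these by a second use of right reversibility. Write $g_1=a_1^{-1}b_1$, $h_1=a_2^{-1}b_2$ with $pb_1=qa_2$. Compare $(pa_1)^{-1}(qb_2)=u^{-1}v$, where $u\in U$, $v\in V$, to obtain $a,b$ with $apa_1=bu$ and $aqb_2=bv$. Then $g_1\in(BU)^{-1}Z$ and $h_1\in Z^{-1}BV$ for $Z\ni apb_1$, $B\ni b$. Their product lies in $U^{-1}V$ because $ZZ^{-1}$ and $B^{-1}B$ consist of identities.

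Alternatively, your openness claim can be salvaged with one extra line. Pick a basic $V_0\ni h_1$ and a basic $W\ni g_1h_1$ inside the target, and put $N=WV_0^{-1}$. This set is open by your claim and contains $g_1$. Moreover $NV_0\subseteq W$, since $V_0^{-1}V_0\subseteq G_o$.

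\textbf{Smaller points.}
\begin{itemize}
\item In the Ore step, $\mathbf{r}(a^{-1}b)=\mathbf{d}(a)$, not $\mathbf{r}(a)$. So what you actually have is $\mathbf{d}(a)=\mathbf{d}(a')$, and right reversibility as defined gives $ua=u'a'$ directly, with no passage to inverses. Your intermediate claim $\mathbf{r}(a)=\mathbf{r}(a')$ is false in general: take $a'=ca$, $b'=cb$.
\item Shrinking to $D\subseteq UA\cap U'A'$ needs $UA$ to be open. This is the fact, used in the paper, that products of open local bisections in an \'etale category are open. Continuity of multiplication alone points the wrong way; you must combine it with $\mathbf{d}$ being open and the local-bisection property.
\item Your remark about $A,B\subseteq C_o$ does not work as stated, since then $A^{-1}B=A\cap B$. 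What you need is $B=\mathbf{r}(B)^{-1}B$, so that $\beta\subseteq\beta^{-1}\beta$ and $C$ is open in $G$. You also need $A^{-1}B\cap G_o=\mathbf{d}(A\cap B)$, so that $G_o$ carries exactly the Boolean topology of $C_o$. Both your use of Lemma~\ref{lem:star-compact} and the conclusion that $G$ is Boolean depend on this.
\end{itemize}
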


We now work towards proving the above theorem.

\begin{lemma}\label{lem:kingdoms} 
Let $C$ be an \'etale topological cancellative right reversible category with groupoid of fractions $G$,
where $\beta$ is the set of all open local bisections of $C$.
Then $\beta^{-1} \beta$ is a basis for a topology on $G$.
In addition, each element of $\beta^{-1}\beta$ is a local bisection.
\end{lemma}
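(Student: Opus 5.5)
The plan is to check the local bisection property directly, and then verify the two basis axioms, using right reversibility to move two representations of the same element of $G$ onto a common denominator. Throughout, for $U,V \in \beta$ write
$$U^{-1}V = \{u^{-1}v \colon u \in U,\ v \in V,\ \mathbf{r}(u) = \mathbf{r}(v)\}.$$
We have $\mathbf{d}(u^{-1}v) = \mathbf{d}(v)$ and $\mathbf{r}(u^{-1}v) = \mathbf{d}(u)$.

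\emph{Local bisections.} Suppose $u^{-1}v, u'^{-1}v' \in U^{-1}V$ have the same domain. Then $\mathbf{d}(v) = \mathbf{d}(v')$, so $v = v'$ because $V$ is a local bisection. Hence $\mathbf{r}(u) = \mathbf{r}(v) = \mathbf{r}(v') = \mathbf{r}(u')$, so $u = u'$ because $U$ is a local bisection. The case of equal ranges is dual, using $\mathbf{r}(u^{-1}v) = \mathbf{d}(u)$.

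\emph{Covering.} By Theorem~\ref{them:four}, every $g \in G$ has the form $a^{-1}b$ with $a,b \in C$. By Lemma~\ref{lem:basis-open-local-bisections} there are $U,V \in \beta$ with $a \in U$ and $b \in V$, so $g \in U^{-1}V$.

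\emph{Intersection axiom.} Let $g \in U^{-1}V \cap U'^{-1}V'$, say $g = u^{-1}v = u'^{-1}v'$.
\begin{enumerate}
\item Since $\mathbf{d}(u) = \mathbf{r}(g) = \mathbf{d}(u')$, right reversibility gives $c,c' \in C$ with $w := cu = c'u'$.
\item Then $wg = cv = c'v'$; call this element $z$. We have $g = w^{-1}z$.
\item Choose $P, P' \in \beta$ with $c \in P$ and $c' \in P'$.
\item Put $W = PU \cap P'U'$ and $Z = PV \cap P'V'$. Then $w \in W$ and $z \in Z$, so $g \in W^{-1}Z$.
\item $W$ and $Z$ are local bisections, since products of local bisections are local bisections and subsets of local bisections are local bisections.
\end{enumerate}

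To see $W^{-1}Z \subseteq U^{-1}V$, take $x \in W$ and $y \in Z$ with $\mathbf{r}(x) = \mathbf{r}(y)$. Write $x = pu_1$ and $y = p_1v_1$ with $p,p_1 \in P$, $u_1 \in U$ and $v_1 \in V$. Then $\mathbf{r}(p) = \mathbf{r}(x) = \mathbf{r}(y) = \mathbf{r}(p_1)$, so $p = p_1$ as $P$ is a local bisection. Hence
$$x^{-1}y = u_1^{-1}p^{-1}pv_1 = u_1^{-1}\mathbf{d}(p)v_1 = u_1^{-1}v_1 \in U^{-1}V.$$
The same argument with $P'$ gives $W^{-1}Z \subseteq U'^{-1}V'$.

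\emph{Main obstacle.} It remains to know that $W,Z \in \beta$, i.e.\ that $PU$ is open whenever $P,U$ are open local bisections of the \'etale category $C$. Everything else above is formal. I would prove this directly as follows. Take $x_0 = p_0u_0 \in PU$. By continuity of $\mathbf{m}$ at $(p_0,u_0)$, together with the fact that $\mathbf{d}|_U$ and $\mathbf{r}|_P$ are homeomorphisms onto open sets of $C_o$, one gets the following. For $x$ in a small open bisection neighbourhood $N$ of $x_0$, the elements $p_x = (\mathbf{r}|_P)^{-1}(\mathbf{r}(x))$ and $u_x = (\mathbf{d}|_U)^{-1}(\mathbf{d}(x))$ depend continuously on $x$. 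Both $x$ and $p_xu_x$ are then points of open bisections that agree at $x_0$. The delicate point is showing $x = p_xu_x$ rather than merely that $p_xu_x$ is near $x$. I would first try to handle this by shrinking $N$ to lie inside an open bisection $M$ containing $p_0u_0$ with $\mathbf{m}(P\times_{C_o} U) \cap \mathbf{d}^{-1}(\mathbf{d}(N)) \subseteq M$, and then using injectivity of $\mathbf{d}$ on $M$. If that fails in general, the fallback is to replace $PU$ by the open set $\mathbf{m}\bigl((P\times U)\cap \mathcal{O}\bigr)$ for a suitable open $\mathcal{O}$. Alternatively one can note that the argument only requires some open bisection $W \ni w$ with $W \subseteq PU \cap P'U'$, and such a $W$ can be produced pointwise from continuity of $\mathbf{m}$.
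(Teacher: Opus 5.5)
Your argument is the paper's own. You cover $G = C^{-1}C$ by sets $U^{-1}V$, use right reversibility to put $u^{-1}v = u'^{-1}v'$ over a common left multiple $w = cu = c'u'$, and take $W = PU \cap P'U'$, $Z = PV \cap P'V'$. The bisection property of $P$ (resp.\ $P'$) then gives $W^{-1}Z \subseteq U^{-1}V \cap U'^{-1}V'$. Your derivation of $cv = c'v'$ via $wg$ makes explicit what the paper leaves as a parenthetical, and your local bisection check is identical to the paper's.

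The one step you flag, that $PU$ is open, is simply asserted in the paper ("the product of open local bisections is an open local bisection"), so nothing is missing relative to it. However, your own sketch of that step does not work as written. You define $p_x$ from $\mathbf{r}(x)$ and $u_x$ from $\mathbf{d}(x)$, which gives no reason for $\mathbf{d}(p_x) = \mathbf{r}(u_x)$, so $p_xu_x$ need not exist. Producing $W$ ``pointwise from continuity of $\mathbf{m}$'' is circular, since continuity gives open preimages, not open images.

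The missing idea is to parametrize by the domain alone. Write $u_e = (\mathbf{d}|_U)^{-1}(e)$ and let $D = \{e \in \mathbf{d}(U) \colon \mathbf{r}(u_e) \in \mathbf{d}(P)\}$, which is open in $C_o$. On $D$ put $\psi(e) = (\mathbf{d}|_P)^{-1}(\mathbf{r}(u_e))\, u_e$. Then $\psi$ is a continuous section of $\mathbf{d}$ with $\psi(D) = PU$. Now let $N$ be an open neighbourhood of $x_0 \in PU$ on which $\mathbf{d}$ is injective. The set $\{x \in N \colon \mathbf{d}(x) \in \psi^{-1}(N)\}$ is an open neighbourhood of $x_0$. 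Each of its elements $x$ equals $\psi(\mathbf{d}(x))$, since both lie in $N$ with the same domain, so it lies in $PU$. This is essentially what your first suggestion with $M$ was reaching for.
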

\begin{proof} We regard $C$ as a subcategory of $G$ in what follows to ease notation.
We shall use the fact that a basis $\beta$ for the topology in $C$ consists of open local bisections.

An element $g$ of $G$ can be written $a^{-1}b$ where $a,b \in C$.
By assumption, $a \in U \in \beta$ and $b \in V \in \beta$ for some $U$ and $V$.
It follows that $g \in U^{-1}V$.
Thus, every element of $G$ is in some element of $\beta^{-1} \beta$.

Suppose that $g \in U_{1}^{-1}V_{1} \cap U_{2}^{-1}V_{2}$, where $U_{1}, U_{2}, V_{1}, V_{2} \in \beta$.
Then $g = a_{1}^{-1}b_{1} = a_{2}^{-1}b_{2}$ where $a_{1} \in U_{1}$, $a_{2} \in U_{2}$, $b_{1} \in V_{1}$ and $b_{2} \in V_{2}$.
Observe that $\mathbf{d}(b_{1}) = \mathbf{d}(b_{2})$ and $\mathbf{d}(a_{1}) = \mathbf{d}(a_{2})$.
Since $C$ is right reversible, we may find elements $a,a' \in C$ such that $ab_{1} = a'b_{2} = r$ (say)
and $aa_{1} = a'a_{2} = s$ (say).
(That the elements $a,a'$ are the same in each case follows from the two different ways that $g$ can be written).
Choose $A,B \in \beta$ such that $A$ contains $a$ and $B$ contains $a'$.
Using the fact that the product of open local bisections is an open local bisection,
we have that 
$AV_{1} \cap BV_{2} = X$ is an open local bisection containing $r$
and
$AU_{1} \cap BU_{2} = Y$ is an open local bisection containing $s$.
Thus $X,Y \in \beta$.
Observe that $s^{-1}r \in Y^{-1}X$, which simplifies to $g$.
We have shown that $g \in Y^{-1}X$.
Now, let $h \in Y^{-1}X$ be any element.
Then we can write $h = (au)^{-1}(a_{1}v)$ where $a \in A$, $u \in U_{1}$, $a_{1} \in A$ and $v \in V_{1}$.
Thus $h = u^{-1}a^{-1}a_{1}v$.
But $a,a_{1} \in A$, a local bisection, and $\mathbf{r}(a) = \mathbf{r}(a_{1})$.
It follows that $a = a_{1}$.
We deduce that $h \in U_{1}^{-1}V_{1}$.
We may similarly show that $h \in U_{2}^{-1}V_{2}$.
We have therefore proved that $g \in Y^{-1}X \subseteq U_{1}^{-1}V_{1} \cap U_{2}^{-1}V_{2}$.

We have therefore proved that $\beta^{-1}\beta$ really is a basis.

We now prove that each element of $\beta^{-1}\beta$ is a local bisection.
Let $g,h \in U^{-1}V$ such that $\mathbf{d}(g) = \mathbf{d}(h)$.
We may write $g = u_{1}^{-1}v_{1}$ and $h = u_{2}^{-1}v_{2}$.
Thus $\mathbf{d}(v_{1}) = \mathbf{d}(v_{2})$.
It follows that $v_{1} = v_{2}$.
From this we deduce that $\mathbf{r}(u_{1}) = \mathbf{r}(u_{2})$ and so $u_{1} = u_{2}$.
It follows that $g = h$.
Symmetry delivers the result.
\end{proof}

The following result reassures us that we do get an \'etale groupoid.

\begin{proposition}\label{prop:main} Let $C$ be an \'etale topological cancellative right reversible category whose groupoid of fractions is $G$. 
Let $\beta$ be a basis for the topology on $C$ consisting of all open local bisections of $C$.
Then $G$ is an \'etale groupoid with respect to the topology with basis $\beta^{-1} \beta$.
In addition, $C$ is an open subset of $G$.
\end{proposition}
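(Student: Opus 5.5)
The topology on $G$ is the one generated by $\beta^{-1}\beta$, which is a basis by Lemma~\ref{lem:kingdoms}. Three things have to be checked: that $\mathbf{d}$, $\mathbf{r}$, multiplication and inversion are continuous; that $\mathbf{d}$ and $\mathbf{r}$ are local homeomorphisms; and that $C$ is open in $G$. Throughout, every basic open set $U^{-1}V$ is a local bisection, and elements of $G$ are written $a^{-1}b$ with $a,b\in C$. I will also use the fact that $G_o=C_o$ (wide subcategory).

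\emph{Inversion and the identities.} Inversion sends $U^{-1}V$ to $V^{-1}U$, another basic open set. So inversion is a bijective self-map preserving the basis, hence a homeomorphism, and $\mathbf{r}(g)=\mathbf{d}(g^{-1})$ reduces everything about $\mathbf{r}$ to $\mathbf{d}$.

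Next I compare the topology of $G$ on $C$ with the original one. The aim is to show $C$ is open in $G$ and that the two topologies agree on $C$.

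\begin{itemize}
\item \emph{Original opens are open in $G$.} Take $V\in\beta$ open in $C$. Since $C_o$ is open in $C$, the set $\mathbf{r}(V)\subseteq C_o$ is an open local bisection. Then $\mathbf{r}(V)^{-1}V=V$, so $V\in\beta^{-1}\beta$ and $V$ is open in $G$. It follows that $C=\bigcup\beta$ is open in $G$.
\item \emph{$G$-opens are open in $C$.} Given $c\in C\cap U^{-1}V$, write $c=u^{-1}v$, so $uc=v$. By continuity of multiplication in $C$ there is an open $W\ni c$ in $C$, with $W$ a local bisection, such that $W\cap \mathbf{r}^{-1}\mathbf{d}(U)\subseteq$ the set of $w$ with $u_w w\in V$. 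Here $u_w\in U$ is the unique element with $\mathbf{d}(u_w)=\mathbf{r}(w)$, which varies continuously because $\mathbf{d}|_U$ is a homeomorphism.
\end{itemize}

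Hence the two topologies agree on $C$. In particular they agree on $C_o=G_o$.

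\emph{$\mathbf{d}$ is a local homeomorphism.} On a basic set $U^{-1}V$ we have $\mathbf{d}(u^{-1}v)=\mathbf{d}(v)$. So $\mathbf{d}$ maps $U^{-1}V$ bijectively onto the set $\mathbf{d}(V)\cap\mathbf{d}(\cdots)$, namely the set of $e\in\mathbf{d}(V)$ whose corresponding $v$ has $\mathbf{r}(v)\in\mathbf{r}(U)$. This is open by étaleness of $C$. It is injective because $U^{-1}V$ is a local bisection.

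Continuity and openness come from comparing with $\mathbf{d}|_V$. For openness, the image of a basic subset $U'^{-1}V'\subseteq U^{-1}V$ is of the same form, hence open. For continuity, the preimage of an open $O\subseteq C_o$ inside $U^{-1}V$ is $U^{-1}(V\cap\mathbf{d}^{-1}(O))$, which is again basic.

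\emph{Multiplication.} This is the main obstacle, because a product $(u^{-1}v)(x^{-1}y)$ must be rewritten in the form $a^{-1}b$ using right reversibility. The rewriting is done pointwise: since $\mathbf{d}(v)=\mathbf{d}(x)$, choose $p,q\in C$ with $pv=qx$. Then
\[
(u^{-1}v)(x^{-1}y)=u^{-1}p^{-1}qy=(pu)^{-1}(qy).
\]
Pick $P\ni p$ and $Q\ni q$ in $\beta$, and form the basic neighbourhood $(PU\cap\ldots)^{-1}(QY)$ of the product. This is exactly the pattern used in the proof of Lemma~\ref{lem:kingdoms}, where the $a,a'$ were absorbed into local bisections $A,B$.

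The claim is then that $(PU)^{-1}(QY)$ contains all products of pairs from $U'^{-1}V'\times X'^{-1}Y'$, for the small basic neighbourhoods obtained by shrinking:
\begin{itemize}
\item replace $V,X$ by $V'=V\cap P^{-1}(PV\cap QX)$ and $X'=X\cap Q^{-1}(PV\cap QX)$;
\item these are open by continuity of multiplication and étaleness in $C$;
\item on such pairs, with $v'\in V'$ and $x'\in X'$ satisfying $\mathbf{d}(v')=\mathbf{d}(x')$, the local bisection property forces $p'v'=q'x'$ for the unique $p'\in P$ and $q'\in Q$ with matching domains.
\end{itemize}
Given that claim, the same computation gives $(u'^{-1}v')(x'^{-1}y')=(p'u')^{-1}(q'y')\in(PU)^{-1}(QY)$, proving continuity of $\mathbf{m}$.

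Verifying that these shrunken sets are open is where I expect the technical care to lie. The first thing I would try is expressing them as $\mathbf{d}$-preimages of open subsets of $C_o$, intersected with $V$ or $X$, since $\mathbf{d}$ restricted to a local bisection is a homeomorphism onto an open set.
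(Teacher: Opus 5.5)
Your treatment of inversion, of $C$ being open in $G$ (via $\mathbf{r}(V)^{-1}V=V$), of the agreement of the two topologies on $C$, and of $\mathbf{d}$ being a local homeomorphism is sound. It is in fact more careful than the paper, which simply asserts étaleness once the basis consists of local bisections. The gap is in the continuity of multiplication, where the quantifiers are in the wrong order. You fix basic neighbourhoods $U^{-1}V\ni g$ and $X^{-1}Y\ni h$ and build from them the neighbourhood $(PU)^{-1}(QY)$ of $gh=(pu)^{-1}(qy)$. You then show that shrunken neighbourhoods of $g,h$ multiply into it. Continuity at $(g,h)$ needs the opposite: for an \emph{arbitrary} basic neighbourhood $W_1^{-1}W_2$ of $gh$, you must find neighbourhoods of $g$ and $h$ whose products land in $W_1^{-1}W_2$. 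This target may be built around a completely different representation $gh=w_1^{-1}w_2$, and nothing in your argument forces $(PU)^{-1}(QY)\subseteq W_1^{-1}W_2$. Shrinking $P,Q,U,Y$ does not help unless you already know that the sets $A^{-1}B$ with $A\ni pu$, $B\ni qy$ form a neighbourhood base at $(pu)^{-1}(qy)$. That is itself a continuity statement about $(a,b)\mapsto a^{-1}b$, and it is essentially what has to be proved.

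The missing idea, which the paper uses explicitly, is that equal fractions are related by common left multiples. If $a^{-1}b=c^{-1}d$ with $a,b,c,d\in C$, right reversibility gives $s,t\in C$ with $sa=tc$, and then $sb=td$ follows. The paper starts from $g_1h_1=(pa_1)^{-1}(qb_2)=u^{-1}v$ in an arbitrary target $U^{-1}V$ and obtains $apa_1=bu$ and $aqb_2=bv$. Choosing $Z\ni apb_1$ and $B\ni b$ in $\beta$, it gets $g_1\in(BU)^{-1}Z$ and $h_1\in Z^{-1}(BV)$. Their product lies in $U^{-1}B^{-1}ZZ^{-1}BV\subseteq U^{-1}V$, because $ZZ^{-1}$ and $B^{-1}B$ consist of identities.

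Your framework can be repaired the same way. Given the target $W_1^{-1}W_2$, take $s,t$ with $spu=tw_1$, so that $sqy=tw_2$. Replace $p,q$ by $sp,sq$; you still have $(sp)v=(sq)x$. Pick $T\ni t$ in $\beta$, and use continuity of multiplication in $C$ to choose $P\ni sp$, $Q\ni sq$, $U\ni u$, $Y\ni y$ in $\beta$ so small that $PU\subseteq TW_1$ and $QY\subseteq TW_2$. Then $(PU)^{-1}(QY)\subseteq(TW_1)^{-1}(TW_2)\subseteq W_1^{-1}W_2$, since $T$ is a local bisection. Your shrinking step, which is correct, then finishes the proof.
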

\begin{proof} 
We proved in Lemma~\ref{lem:kingdoms}, that $\beta^{-1} \beta$ is a basis for a topology on $G$. 
But the topology on $G$ has as a basis local bisections.
From this, it follows that $G$ will be \'etale.
It is clear that inversion is a homeomorphism.
It remains to show that the partial multiplication in $G$ is continuous.
To that end, 
let $g_{1}h_{1} \in U^{-1}V$ be any element where $U,V \in \beta$.
Let $g_{1} = a_{1}^{-1}b_{1}$ and $h_{1} = a_{2}^{-1}b_{2}$ where $a_{1},b_{1},a_{2},b_{2} \in C$.
By right reversibilty, 
let $p$ and $q$ be such that $pb_{1} = qa_{2} = r$.
Then $g_{1}h_{1} = (pa_{1})^{-1}(qb_{2})$.
By assumption, $g_{1}h_{1} = u^{-1}v$ where $u \in U$ and $v \in V$.
From the fact that we are dealing with a groupoid of fractions, 
it follows that there are elements $a,b \in C$ such that
$apa_{1} = bu$ and $aqb_{2} = bv$.
Let $Z$ and $B$ be open local bisections of $C$ such that 
$apb_{1} \in Z$ and $b \in B$.
Consider, first, the set $(BU)^{-1}Z$.
We have that $apa_{1} \in BU$.
Thus $a_{1}^{-1}p^{-1}a^{-1} \in (BU)^{-1}$.
By definition, $apb_{1} \in Z$.
It follows that $g_{1} \in (BU)^{-1}Z$.
Now, we consider, $Z^{-1}BV$.
By definition, $apb_{1} \in Z$.
But $pb_{1} = qa_{2}$.
Thus $aqa_{2} \in Z$.
Whence $a_{2}^{-1}q^{-1}a^{-1} \in Z^{-1}$.
But $aqb_{2} \in BV$.
It follows that $h_{1} \in Z^{-1}BV$.
We now calculate the product $[(BU)^{-1}Z][Z^{-1}BV]$.
But this is contained in $U^{-1}V$
because if $X$ is a local bisection
both $XX^{-1}$ and $X^{-1}X$ consist of identities.
This is enough to show that the partial multiplication in the groupoid $G$ is continuous. 

It only remains to show that $C$ is an open subset of $G$.
The set of identities $C_{o}$ is an open set
in $C$. It is therefore an open local bisection.
Thus $\beta \subseteq \beta \beta^{-1}$.
This means that $C$ is an open subset of its groupoid of fractions.
\end{proof}

{\em We can now prove Theorem~\ref{them:stor}.}
Let $C$ be a Boolean cancellative right reversible category whose groupoid of fractions is $G$. 
We have seen that $G$ is an \'etale category.
It is Boolean because $C$ is a wide subcategory of $G$.
Because $C$ is open in $G$, it follows that a compact subset of $C$ is also compact in $G$.
Let $\beta$ be a basis for the topology on $C$ consisting of all compact-open local bisections of $C$.
The elements of $\beta^{-1}\beta$ are compact-open local bisections of $G$.
It only remains to show that $\beta^{-1}\beta$ is a basis for the topology on $G$.
We have proved that a basis for the topology on $G$ consists of the subsets of the form
$U^{-1}V$ where $U$ and $V$ are open local bisections of $C$.
However, $\beta$ is a basis for the topology on $C$.
Thus $U$ is a union of elements of $\beta$.
Likewise, $V$ is a union of elements of $\beta$.
It follows that $U^{-1}V$ is a union of elements of $\beta^{-1}\beta$.
This proves that $\beta^{-1}\beta$ is a basis for the topology on $G$. \\

The following is a converse to what we have found.

\begin{theorem}\label{them:twelve} Let $G$ be an \'etale topological groupoid.
Suppose that $C$ is an open, wide subcategory of $G$ which is right reversible 
and such that $CC^{-1} = G$.
Then $C$ is an \'etale topological cancellative category,
and if $\alpha$ is a basis of $C$ consisting of open local bisections
then $\alpha \alpha^{-1}$ is a basis for the topology on $G$.
\end{theorem}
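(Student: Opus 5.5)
The plan is to treat $C$ as a subspace of $G$ with the subspace topology. The algebraic claim is immediate: $C$ is a subcategory of a groupoid, so if $ab = ac$ in $C$ then multiplying on the left by $a^{-1}$ in $G$ gives $b = c$, and dually. Thus $C$ is cancellative.

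For the \'etale property I will use three observations.
\begin{itemize}
\item $C$ is wide, so $C_{o} = G_{o}$. Since $C$ is a subcategory, $\mathbf{d}$ and $\mathbf{r}$ map $C$ into $C_{o}$.
\item The maps $\mathbf{d},\mathbf{r},\mathbf{m}$ of $C$ are restrictions of the corresponding maps of $G$, so they are continuous.
\item $C$ is open in $G$, so the restriction of a local homeomorphism $G \to G_{o}$ to $C$ is again a local homeomorphism. Concretely, if $a \in C$ and $W$ is an open neighbourhood of $a$ in $G$ on which $\mathbf{d}$ is a homeomorphism onto an open set, then $W \cap C$ is open in $G$. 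Its image $\mathbf{d}(W\cap C)$ is therefore open in $G_{o}$, and $\mathbf{d}$ restricted to $W \cap C$ is a homeomorphism onto that image.
\end{itemize}
Hence $C$ is an \'etale topological category. By Lemma~\ref{lem:basis-open-local-bisections} it has a basis $\alpha$ of open local bisections; I will work with an arbitrary such $\alpha$.

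Next I show that $\alpha\alpha^{-1}$ consists of open subsets of $G$. If $U,V \in \alpha$, then $U$ and $V$ are open in $C$, hence open in $G$ because $C$ is open in $G$. Inversion is a homeomorphism of $G$, so $V^{-1}$ is open. I then invoke the standard fact that in an \'etale groupoid the product of two open sets is open. If needed, I would justify it in a line: the set of composable pairs in $U \times V^{-1}$ is open in $G \times_{G_{o}} G$, and multiplication is an open map in an \'etale groupoid. So $UV^{-1}$ is open in $G$.

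The set $UV^{-1}$ is also a local bisection. If $uv^{-1}$ and $u'v'^{-1}$ have the same domain, then $\mathbf{r}(v) = \mathbf{r}(v')$, so $v = v'$ since $V$ is a local bisection. Then $\mathbf{d}(u) = \mathbf{d}(u')$, so $u = u'$. The dual argument uses ranges.

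The main step is to show that $\alpha\alpha^{-1}$ is a basis. Let $W$ be open in $G$ and $g \in W$. Because $CC^{-1} = G$, I can write $g = ab^{-1}$ with $a,b \in C$ and $\mathbf{d}(a) = \mathbf{d}(b)$. Consider the map
$$\phi \colon \{(x,y) \in G \times G \colon \mathbf{d}(x) = \mathbf{d}(y)\} \rightarrow G, \qquad (x,y) \mapsto xy^{-1}.$$
This map is continuous, since it is the composite of inversion in the second coordinate with multiplication. As $\phi(a,b) = g \in W$, there are open neighbourhoods $U'$ of $a$ and $V'$ of $b$ in $G$ such that $xy^{-1} \in W$ whenever $x \in U'$, $y \in V'$ and $\mathbf{d}(x) = \mathbf{d}(y)$. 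In other words, $U'V'^{-1} \subseteq W$.

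Intersecting with the open set $C$ and using that $\alpha$ is a basis of $C$, I pick $U,V \in \alpha$ with $a \in U \subseteq U'$ and $b \in V \subseteq V'$. Then $g \in UV^{-1} \subseteq W$. Every element of $G$ lies in some member of $\alpha\alpha^{-1}$ because $CC^{-1} = G$. Together with the openness established above, this shows $\alpha\alpha^{-1}$ is a basis for the topology on $G$.

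I expect the main obstacle to be the openness of $UV^{-1}$, that is, the fact that products of open sets in an \'etale groupoid are open. This is where the \'etale hypothesis on $G$ is genuinely used, and I would cite it as a standard fact. Right reversibility is not needed for this direction beyond the factorisation $G = CC^{-1}$.
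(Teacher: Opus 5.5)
Your proof is correct, but the central step is argued differently from the paper.

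\textbf{The paper's route.} The paper writes $g=a^{-1}b$, so it actually treats the mirror-image case $C^{-1}C=G$ with basis $\alpha^{-1}\alpha$; your version matches the statement as written. It takes an open local bisection $X$ of $G$ containing $g$ and picks any open local bisection $A$ of $C$ containing $a$. It then puts $Y=AX\cap C$, which contains $b=ag$. Finally it shows $g\in A^{-1}Y\subseteq X$ by an algebraic check: an element of $A^{-1}Y$ has the form $a_{1}^{-1}a_{2}x$ with $a_{1},a_{2}\in A$ of equal range, so $a_{1}=a_{2}$ because $A$ is a bisection.

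\textbf{Your route.} You use continuity of the division map $(x,y)\mapsto xy^{-1}$ on pairs with common domain. This gives neighbourhoods $U',V'$ with $U'V'^{-1}\subseteq W$, which you then shrink to members of $\alpha$.

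\textbf{What each buys.} The paper's translation trick is more explicit: one factor $A$ can be fixed arbitrarily, and only the other is adapted to $X$. However, it depends essentially on the bisection property of $A$.

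Your argument uses that property only for the side remark that $UV^{-1}$ is a bisection. So it actually shows that $\{UV^{-1}\colon U,V\in\alpha\}$ is a basis of $G$ for any open basis $\alpha$ of $C$.

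It also works directly with the prescribed $\alpha$. The paper's final sentence only produces sets built from arbitrary open local bisections of $C$. It leaves the reduction to $\alpha$ implicit: take $A\in\alpha$, then a member of $\alpha$ between $b$ and $Y$.

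You also explicitly check that each $UV^{-1}$ is open in $G$, which the paper takes for granted. Both arguments rest on the same standard fact that products of open sets in an \'etale groupoid are open.
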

\begin{proof} It is immediate that $C$ is cancellative and, because $C$ is open in $G$,
it inherits the main features of the topology of $G$.
Thus $C$ is an \'etale topological category.
Let $X$ be an open local bisection of $G$ containing the element $g$.
Let $g = a^{-1}b$ where $a,b \in C$.
Let $A$ be any open local bisection of $C$ that contains $a$.
Then $b \in AX$.
Thus $Y = AX \cap C$ is non-empty, since it contains $b$, and is an open local bisection of $C$.
Thus $g \in A^{-1}Y$.
We prove that $A^{-1}Y \subseteq X$.
Let $h \in A^{-1}Y$.
Then $h = a_{1}^{-1}a_{2}x$, where $a_{1},a_{2} \in A$ and $x \in X$.
But $A$ is a local bisection and so $a_{1} = a_{2}$.
It follows that $h \in X$.
We have proved that each open local bisection of $G$ is a union of open local
bisections of the form $U^{-1}V$ where $U$ and $V$ are open local bisections of $C$.
\end{proof}

\section{The embedding theorem}

Let $S$ be an ample monoid.
From Section~2, its associated category of prime filters $\mathsf{C}(S)$ is cancellative.
The first question therefore is: what condition on $S$ is equivalent to $\mathsf{C}(S)$ being right reversible?
This is answered by the following theorem.
To state it succinctly, we introduce some notation.
Let $F$ be a prime filter in $\mathsf{E}(S)$.
We write $x \leq_{F} y$ iff $x^{\ast}, y^{\ast} \in F$ and $x \leq y$;
we say that this is the natural partial order {\em relativized to $F$}. 
Let $S$ be an ample monoid.
Then $S$ satisfies {\em condition (C)} if the following holds:
for any prime filter $F \subseteq \mathsf{E}(S)$ and elements $a, b \in S$
such that $a^{\ast}b^{\ast} \in F$ there exist non-zero elements $c,d \in S$ such that 
the following two conditions hold:
\begin{enumerate}  
\item There exists $a_{1} \leq_{F} a$ such that $a_{1}^{+} \leq c^{\ast}$;
there exists $b_{1} \leq_{F} b$ such that $b_{1}^{+} \leq d^{\ast}$.
\item There is an element $e \in F$ such that $cae = dbe$. 
\end{enumerate}

\begin{proposition}\label{prop:mary-anne} Let $S$ be an ample monoid.
Then $\mathsf{C}(S)$ is right reversible iff condition (C) holds for $S$.
\end{proposition}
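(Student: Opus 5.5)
We unwind right reversibility of $\mathsf{C}(S)$ into a statement about prime filters, and then translate it into elements of $S$ using the coset description of prime filters from Section~2. Here $S$ is a Boolean ample monoid, so that $\mathsf{C}(S)$ makes sense and $\mathsf{E}(S) = \mathsf{Proj}(S)$. Right reversibility says the following. For prime filters $A,B$ with $\mathbf{d}(A) = \mathbf{d}(B) = F$, where $F$ is an identity, i.e. a prime filter of projections, there are prime filters $P,Q$ with $P \cdot A = Q \cdot B$. We use two facts from Section~2. First, every prime filter with domain $F$ has the form $(aF)^{\uparrow}$ with $a^{\ast} \in F$. Second, two prime filters with the same domain that intersect are equal. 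Thus $A = (aF)^{\uparrow}$ and $B = (bF)^{\uparrow}$ with $a^{\ast},b^{\ast} \in F$, which is exactly the hypothesis $a^{\ast}b^{\ast} \in F$ of condition (C). Moreover $P \cdot A = Q \cdot B$ holds iff these two prime filters, both having domain $F$, share an element.

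For (C) $\Rightarrow$ right reversible, we take $c,d,a_{1},b_{1},e$ as in (C). The element $a_{1}^{+}$ lies in $\mathbf{r}(A)$ because $a_{1} \in A$: indeed $a_{1} \leq a$ and $a_{1}^{\ast} \in F$ give $a_{1} = a a_{1}^{\ast} \in aF$. Since $a_{1}^{+} \leq c^{\ast}$, we get $c^{\ast} \in \mathbf{r}(A)$. Hence $P = (c\,\mathbf{r}(A))^{\uparrow}$ is a prime filter with $\mathbf{d}(P) = \mathbf{r}(A)$, and similarly $Q = (d\,\mathbf{r}(B))^{\uparrow}$ with $\mathbf{d}(Q) = \mathbf{r}(B)$. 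Then $cae \in P \cdot A$ and $dbe \in Q \cdot B$, because $ae \in A$ and $be \in B$ as $e \in F$. These are equal and nonzero, being members of prime filters. So $P \cdot A$ and $Q \cdot B$ intersect and both have domain $F$, which forces $P \cdot A = Q \cdot B$. Along the way we will check that $(c\,\mathbf{r}(A))^{\uparrow}$ does not depend on the choice of witnesses and that $\mathbf{d}(P \cdot A) = \mathbf{d}(A)$. Both are routine facts about the category $\mathsf{C}(S)$.

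For the converse, we start from $a,b,F$ with $a^{\ast}b^{\ast} \in F$, form $A = (aF)^{\uparrow}$ and $B = (bF)^{\uparrow}$, and take $P,Q$ with $P \cdot A = Q \cdot B$. We pick $c \in P$ and $d \in Q$; these are nonzero since the filters are proper. Because $c^{\ast} \in \mathbf{d}(P) = \mathbf{r}(A)$, there is $a' \in A$ with $a'^{+} \leq c^{\ast}$. Then $a' \geq a f$ for some $f \in F$, so $a_{1} = a f a'^{\ast}$ satisfies $a_{1} \leq_{F} a$ and $a_{1}^{+} \leq a'^{+} \leq c^{\ast}$. Likewise we obtain $b_{1}$. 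Finally, $ca \in P \cdot A = Q \cdot B$ and $db \in Q \cdot B$, so these two elements have a common lower bound $x$ in that filter. Since $\mathbf{d}(P \cdot A) = F$, we may shrink $x$ so that $x^{\ast} \in F$. Then $x = ca\,x^{\ast} = db\,x^{\ast}$ by the definition of the order, and $e = x^{\ast}$ works.

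The main obstacle is the bookkeeping about the filter $P \cdot A$. We must show that its domain is $F$ and that lower bounds inside it can be chosen with star in $F$. We must also verify that $a_{1}^{+} \leq a'^{+}$ when $a_{1} \leq a'$, which follows from (A4)–(A6) in the ample setting. We would first establish a small lemma: for prime filters $P,A$ with $\exists P \cdot A$, one has $\mathbf{d}(P \cdot A) = \mathbf{d}(A)$, and every element of $P \cdot A$ lies above some $pa$ with $p \in P$, $a \in A$, and $(pa)^{\ast} \in \mathbf{d}(A)$.
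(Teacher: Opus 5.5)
Your proposal is correct and follows essentially the same route as the paper. Both directions rest on the coset description $A=(aF)^{\uparrow}$ and on the fact that two prime filters with the same domain that intersect must coincide: one direction builds $P=(c\,\mathbf{r}(A))^{\uparrow}$ and $Q=(d\,\mathbf{r}(B))^{\uparrow}$, and the other takes $e=x^{\ast}$ for a common lower bound $x$ of $ca$ and $db$. The only cosmetic difference is that the paper gets $a_{1}=ae$ directly from $\mathbf{r}(A)=((aF)^{+})^{\uparrow}$, whereas you pass through an auxiliary $a'\in A$ (your $a_{1}=afa'^{\ast}$ in fact equals $af$); you also spell out the extraction of $e$ and the non-vanishing of $c,d$, which the paper states tersely.
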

\begin{proof} Suppose first that $\mathsf{C}(S)$ is right reversible.
We show that (C) holds.
Let $F \subseteq \mathsf{E}(S)$ be a prime filter and let $a, b \in S$
such that $a^{\ast}b^{\ast} \in F$. 
Put $A = (aF)^{\uparrow}$ and $B = (bF)^{\uparrow}$.
Both $A$ and $B$ are prime filters with $\mathbf{d}(A) = F^{\uparrow} = \mathbf{d}(B)$.
Thus by right reversibility, there are prime filters $C$ and $D$ such that $\exists C \cdot A$ and $\exists B \cdot D$ and $C \cdot A = D \cdot B$.
We may write $C = (c \mathbf{r}(A))^{\uparrow}$, where $c^{\ast} \in \mathbf{r}(A)$,
and $D = (d \mathbf{r}(B))^{\uparrow}$, where $d^{\ast} \in \mathbf{r}(B)$.
Then $\mathbf{r}(A) = ((aF)^{+})^{\uparrow}$ and $\mathbf{r}(B) = ((bF)^{+})^{\uparrow}$.
It follows that $(ae)^{+} \leq c^{\ast}$ for some $e \in F$
and $(bf)^{+} \leq d^{\ast}$ for some $f \in F$.
We may therefore put $a_{1} = ae$ and $b_{1} = bf$.
We can write $C \cdot A = (caF)^{\uparrow}$ and $D \cdot B = (dbF)^{\uparrow}$.
Since $\mathbf{d}(C \cdot A) = \mathbf{d}(D \cdot B)$,
the equality $C \cdot A = D \cdot B$ follows from the fact that $C \cdot A \cap D \cdot B \neq \varnothing$.
This means that $cae = dbe$ for some $e \in F$. 

We now prove the converse.
Suppose that (C) holds.
We prove that $\mathsf{C}(S)$ is right reversible.
Let $A$ and $B$ be prime filters such that $\mathbf{d}(A) = \mathbf{d}(B)$.
Then $A = (aF)^{\uparrow}$, where $a^{\ast} \in F$,
and $B = (bF)^{\uparrow}$, where $b^{\ast} \in F$.
We now invoke (C).
There are elements $c$ and $d$ having various properties.
Put $C = (c\mathbf{r}(A))^{\uparrow}$ and $D = (d\mathbf{r}(B))^{\uparrow}$.
Now, $\mathbf{r}(A) = ((aF)^{+})^{\uparrow}$.
But $a_{1} = ae$ where $e \in F$.
Thus $a_{1}^{+} \in  \mathbf{r}(A)$.
It follows that  $c^{\ast} \in \mathbf{r}(A)$.
Thus $c \in C$.
Similarly, $d \in D$.
Thus $C \cdot A = (ca \mathbf{d}(A))^{\uparrow}$ and $D \cdot B = (db\mathbf{d}(A))$.
There is some $e \in F$ such that $cae = dbe$.
It follows that $C \cdot A \cap D \cdot B \neq \varnothing$.
Thus, in fact, $C \cdot A = D \cdot B$, and so $\mathsf{C}(S)$ is right reversible. 
\end{proof}

Condition (C) is not particularly nice, but it does have a pleasant consequence.

\begin{corollary} Let $S$ be a Boolean ample monoid satisfying (C).
Then for all elements $a$ and $b$ such that $a^{\ast}b^{\ast} \neq 0$
we have that $Sa \cap Sb \neq \{0\}$.
\end{corollary}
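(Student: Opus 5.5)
We prove the corollary by feeding a suitable prime filter into condition (C) and checking that the resulting products are non-zero. Since $S$ is Boolean, $\mathsf{Proj}(S) = \mathsf{E}(S)$ (the projections are exactly the idempotents in an ample monoid) is a Boolean algebra. Let $a,b \in S$ with $a^{\ast}b^{\ast} \neq 0$.

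First, by classical Stone duality (equivalently, the ultrafilter lemma in the Boolean algebra $\mathsf{E}(S)$), there is a prime filter $F \subseteq \mathsf{E}(S)$ with $a^{\ast}b^{\ast} \in F$. Second, condition (C) applied to $F$, $a$ and $b$ yields non-zero elements $c,d \in S$ together with $a_{1} \leq_{F} a$, $b_{1} \leq_{F} b$ and $e \in F$ such that
$$a_{1}^{+} \leq c^{\ast}, \qquad b_{1}^{+} \leq d^{\ast}, \qquad cae = dbe.$$
Third, the element $cae = dbe$ will be our witness. Since $cae = (ce)\cdot a$ is not the right form, we use (A6) instead: $ae = (ae)^{+}a$ by (A4), so $cae = c(ae)^{+}a \in Sa$. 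Likewise $dbe = d(be)^{+}b \in Sb$. Hence $cae \in Sa \cap Sb$, and it remains only to show $cae \neq 0$.

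The main obstacle is this non-vanishing. Our plan is to route it through the prime filter picture, mirroring the proof of Proposition~\ref{prop:mary-anne}, rather than through direct computation. Put $A = (aF)^{\uparrow}$, which is a prime filter (and in particular proper) with $\mathbf{d}(A) = F$. Since $a_{1} = ae'$ for some $e' \in F$, we have $a_{1}^{+} \in \mathbf{r}(A)$. As $a_{1}^{+} \leq c^{\ast}$, this gives $c^{\ast} \in \mathbf{r}(A)$, so $C = (c\,\mathbf{r}(A))^{\uparrow}$ is a prime filter containing $c$. The composite $C\cdot A = (CA)^{\uparrow}$ is a prime filter, and so contains no $0$. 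Moreover $cae \in C \cdot A$, because $c \in C$ and $ae \in A$. Therefore $cae \neq 0$.

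Combining the three steps, $0 \neq cae = dbe \in Sa \cap Sb$, so $Sa \cap Sb \neq \{0\}$.
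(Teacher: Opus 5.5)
Your proof is correct and follows the paper's argument: pick a prime filter $F \ni a^{\ast}b^{\ast}$, apply (C), rewrite $cae = c(ae)^{+}a$ and $dbe = d(be)^{+}b$ via (A6) (the identity $ae = (ae)^{+}a$ is (A6), not (A4) as you momentarily cite), and conclude non-vanishing from membership in a prime filter. Your identification of that prime filter as $C \cdot A$, with $A = (aF)^{\uparrow}$ and $C = (c\,\mathbf{r}(A))^{\uparrow}$ as in the proof of Proposition~\ref{prop:mary-anne}, just spells out the step the paper states in one phrase.
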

\begin{proof} Because $a^{\ast}b^{\ast} \neq 0$, there is a prime filter $F$ in $\mathsf{E}(S)$ containing $a^{\ast}b^{\ast}$.
Invoking condition (C), there are elements $c$ and $d$ and an idempotent $e \in F$ such that
$cae = dbe$.
We now apply Axiom (A6), to get that
$(c(ae)^{+})a = (d(be)^{+})b$ which is non-zero since it lives in a prime filter.
\end{proof}

The condition (C) does have a natural interpretation.

\begin{proposition}\label{prop:homuz} Let $S$ be a full submonoid of a Boolean inverse monoid $T$.
We suppose also that $S$ is a Boolean ample monoid.
Then $\mathsf{C}(T)$ is a groupoid of fractions of $\mathsf{C}(S)$ iff
each non-zero element of $T$ can be written as a finite join of elements of the form $a^{-1}b$
where $a,b \in S$.
\end{proposition}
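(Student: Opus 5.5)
We first make precise how $\mathsf{C}(S)$ sits inside $\mathsf{C}(T)$. The inclusion $\iota\colon S \rightarrow T$ is a homomorphism of Boolean restriction monoids, and since $S$ is full we have $\mathsf{Proj}(S) = \mathsf{E}(S) = \mathsf{E}(T)$. The plan is to show that the map $A \mapsto (A)^{\uparrow}_{T}$, taking a prime filter of $S$ to the upward closure of $A$ in $T$, is an injective functor $\mathsf{C}(S) \rightarrow \mathsf{C}(T)$ onto a wide subcategory. Three facts are needed.
\begin{enumerate}
\item Every prime filter of $S$ has the form $(aF)^{\uparrow}$ with $F$ a prime filter of $\mathsf{E}(S) = \mathsf{E}(T)$ and $a^{\ast} \in F$. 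Its $T$-closure is $(aF)^{\uparrow}$ computed in $T$, which is prime by the coset description in Section~2.
\item The identities coincide, since both are indexed by the prime filters of the common Boolean algebra of idempotents.
\item Products are preserved, because $(AB)^{\uparrow}$ is computed the same way.
\end{enumerate}
Injectivity follows because $A = (aF)^{\uparrow}_{T} \cap S$; this uses that the natural partial order on $S$ is the restriction of the one on $T$. This is where fullness and (A4) enter: if $s \in S$, $t \in T$ and $t \leq s$, then $t = st^{\ast}$ with $t^{\ast} \in S$. Equivalently, this is the relational functor $\rho_{\iota}$ from Section~2 specialised to the situation where each $\iota^{-1}(B)$ is a single prime filter. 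We also note that $\mathsf{C}(T)$ is a groupoid by Proposition~\ref{prop:fury}(3), and that the image of $\mathsf{C}(S)$ is open, being the union of the basic sets $X_{s}$ with $s \in S$.

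With $\mathsf{C}(S)$ identified with a wide subcategory of the groupoid $\mathsf{C}(T)$, Theorem~\ref{them:four} (uniqueness) says that $\mathsf{C}(T)$ is a groupoid of fractions of $\mathsf{C}(S)$ exactly when every prime filter $P$ of $T$ factors as $A^{-1}\cdot B$ with $A, B \in \mathsf{C}(S)$. Here inversion in $\mathsf{C}(T)$ is $A \mapsto A^{-1}$. Both directions reduce to translating this between prime filters and elements.

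For ($\Leftarrow$), let $P$ be a prime filter of $T$ and pick $t \in P$. Write $t = \bigvee_{i} a_{i}^{-1}b_{i}$ with $a_{i}, b_{i} \in S$. Primeness gives $a_{i}^{-1}b_{i} \in P$ for some $i$. Put $F = \mathbf{d}(P)$, so $P = (a_{i}^{-1}b_{i}F)^{\uparrow}$. Set $B = (b_{i}F)^{\uparrow}$ in $S$, which is legitimate since $(a_{i}^{-1}b_{i})^{\ast} \leq b_{i}^{\ast}$ lies in $F$. Set $A = (a_{i}\mathbf{r}(B))^{\uparrow}$; for this to be defined we need $a_{i}^{\ast} \in \mathbf{r}(B)$. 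This holds because $(a_{i}^{-1}b_{i}e)^{+}$ is nonzero and below $a_{i}^{\ast}$, and $b_{i}e\,(\ldots)$ adjusts accordingly. Concretely, one checks that $a_{i}^{-1}a_{i}b_{i}e \in \mathbf{r}$-compatible form, so that $a_{i}^{\ast}(b_{i}e)^{+} \neq 0$ lies in $\mathbf{r}(B)$. Then $A^{-1}\cdot B$ contains $a_{i}^{-1}b_{i}$ and has the same domain as $P$. By the coset lemma (two prime filters with equal $\mathbf{d}$ that intersect are equal), $P = A^{-1}\cdot B$.

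For ($\Rightarrow$), let $t \in T$ be nonzero. For each $P \in X_{t}$, write $P = A_{P}^{-1}\cdot B_{P}$ and choose $a_{P} \in A_{P}$ and $b_{P} \in B_{P}$. Then $a_{P}^{-1}b_{P} \in P$, and $t$ and $a_{P}^{-1}b_{P}$ share a lower bound $u_{P} \in P$. Restricting, $u_{P} = a_{P}^{-1}b_{P}u_{P}^{\ast} = a_{P}^{-1}(b_{P}u_{P}^{\ast})$, and $b_{P}u_{P}^{\ast} \in S$ by fullness, so $u_{P}$ is itself of the form $a^{-1}b$ and lies below $t$. The sets $X_{u_{P}}$ cover $X_{t}$, which is compact by Theorem~\ref{them:epic}, so finitely many suffice. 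Since $a \mapsto X_{a}$ is an isomorphism $T \cong \mathsf{KB}(\mathsf{C}(T))$ preserving joins, $t$ is the join of these finitely many $u_{P}$, which are pairwise compatible as they all lie below $t$.

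The main obstacle I expect is the first paragraph: verifying carefully that $A \mapsto A^{\uparrow}_{T}$ is an injective functor onto a wide, open subcategory, so that Theorem~\ref{them:four} applies verbatim. The domain check $a_{i}^{\ast} \in \mathbf{r}(B)$ in ($\Leftarrow$) is the other fiddly point. I would handle it by replacing $b_{i}$ by $a_{i}^{\ast}b_{i}$; this lies in $S$ by fullness and leaves $a_{i}^{-1}b_{i}$ unchanged.
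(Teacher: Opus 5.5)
Your overall route is the paper's. You use the bijection $A\mapsto A^{\uparrow}$ (with inverse $B\mapsto B\cap S$) to make $\mathsf{C}(S)$ a wide subcategory of $\mathsf{C}(T)$. For ($\Leftarrow$) you factor a prime filter of $T$ through an element $a_i^{-1}b_i$ it contains. For ($\Rightarrow$) you cover the compact set $X_t$ by sets $X_u$ with $u\le t$ of the form $a^{-1}b$. Your set-up and your ($\Rightarrow$) direction are fine; writing $u_P=a_P^{-1}(b_Pu_P^{\ast})$ is just an explicit version of the paper's remark that $S$ is an order ideal of $T$.

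The ($\Leftarrow$) direction, however, swaps domain and range, and as written it fails.

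\emph{What goes wrong.} For $A^{-1}\cdot B$ to be defined you need $\mathbf{d}(A^{-1})=\mathbf{r}(A)=\mathbf{r}(B)$. But $A=(a_i\mathbf{r}(B))^{\uparrow}$ would satisfy $\mathbf{d}(A)=\mathbf{r}(B)$ instead. Moreover, the requirement $a_i^{\ast}\in\mathbf{r}(B)$ is false in general. Your inequality $(a_i^{-1}b_ie)^{+}\le a_i^{\ast}$ only places $a_i^{\ast}$ in $\mathbf{r}(P)$, not in $\mathbf{r}(B)$.

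\emph{What is actually forced into $\mathbf{r}(B)$} is $a_i^{+}=a_ia_i^{-1}$. Since $(a_i^{-1}b_i)^{\ast}=b_i^{-1}a_i^{+}b_i\in F$, the filter $B$ contains $b_ib_i^{-1}a_i^{+}b_i=a_i^{+}b_i$. Hence $\mathbf{r}(B)\ni(a_i^{+}b_i)^{+}=a_i^{+}b_i^{+}\le a_i^{+}$.

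\emph{A counterexample.} Whenever $a_i^{2}=0$ we get $a_i^{\ast}a_i^{+}=0$, so $a_i^{\ast}\notin\mathbf{r}(B)$. For instance, take $S=T$ the $2\times 2$ rook matrices and $a_i=b_i$ the matrix unit $e_{12}$, so that $a_i^{-1}b_i=e_{22}\neq 0$.

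\emph{Your proposed repair also fails.} In general $a_i^{-1}(a_i^{\ast}b_i)\neq a_i^{-1}b_i$; in the example $a_i^{\ast}b_i=a_i^{-1}a_i^{2}=0$. The reason is that $a_i^{+}$, not $a_i^{\ast}$, is absorbed on the right of $a_i^{-1}$.

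\emph{The fix} is to use the dual coset form $A=(\mathbf{r}(B)a_i)^{\uparrow}$. This is a prime filter because $a_i^{+}\in\mathbf{r}(B)$, it has $\mathbf{r}(A)=\mathbf{r}(B)$, and it contains $a_i\in S$. Equivalently, as in the paper, set $A^{-1}=(a_i^{-1}\mathbf{r}(B))^{\uparrow}$, which is legitimate because $(a_i^{-1})^{\ast}=a_i^{+}$. Then $A^{-1}\cdot B$ is defined, contains $a_i^{-1}b_i$, and has domain $\mathbf{d}(B)=F=\mathbf{d}(P)$. Your coset argument then gives $P=A^{-1}\cdot B$. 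If you want to modify $b_i$, the harmless replacement is $a_i^{+}b_i$, not $a_i^{\ast}b_i$.
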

\begin{proof} We define first a functor $\iota \colon \mathsf{C}(S) \rightarrow \mathsf{C}(T)$
which is an embedding where $\iota (\mathsf{C}(S))$ is a wide subcategory of $\mathsf{C}(T)$.
This will depend only on the fact that $S$ is a full submonoid of $T$.
This will involve comparing prime filters in $S$ with prime filters in $T$.
Let $A$ be a prime filter of $S$.
Then 
$$A^{u} = \{t \in T \colon a \leq t \text{ for some } a \in A\}$$ 
is a prime filter in $T$ that contains elements of $S$.
Conversely, let $B$ be a prime filter in $T$ that contains elements of $S$.
Then 
$$B^{d} = B \cap S$$
is a prime filter in $S$.
Observe that $(A^{u})^{d} = A$ and $(B^{d})^{u} = B$.
We have therefore defined a bijection between the prime filters in $S$ and the prime filters in $T$ that contain elements of $S$.
We therefore define the functor $\iota$ by $A \mapsto A^{u}$.
We may now prove the proposition.

Suppose that each non-zero element of $T$ can be written as a finite join of elements of the form $a^{-1}b$ where $a,b \in S$.
Let $X$ be any prime filter of $T$.
Let $t \in X$ be any element, necessarily non-zero.
Then from the fact that we are dealing with a prime filter and given how $t$ can be written, there exists $a^{-1}b \in X$ for some $a,b \in S$.
Then $X = A \cdot B$ where $a^{-1} \in A$ and $b \in B$.
We may therefore  write $X = ((A)^{-1})^{-1} \cdot B$
where
$A = (a^{-1}\mathbf{r}(B))^{\uparrow}$
and
$B =(b \mathbf{d}(X))^{\uparrow}$.
Observe that $a \in A^{-1}$
and so both $A^{-1}$ and $B$ contain elements of $S$.
It follows that both $A^{-1}$ and $B$ are in the image of $\iota$.

To prove the converse,
let $t$ be any non-zero element of $T$.
By assumption, any prime filter $X$ of $T$ containing $t$ must have the form $A^{-1} \cdot B$ where $A$ and $B$ are 
prime filters of $T$ that have non-empty intersections with $S$.
Let $a \in A \cap S$ and let $b \in B \cap S$.
Thus $a^{-1}b \in A^{-1}  \cdot B = X$.
Now,  $X$ is a prime filter that contains both $t$ and $a^{-1}b$.
But any element less than $a^{-1}b$ has the form $a_{1}^{-1}b_{1}$ where $a_{1}, b_{1} \in S$;
this follows from the fact that $S$ is an order ideal of $T$.
Whence $X_{t} = \bigcup_{a^{-1}b \leq t, a,b \in S} X_{a^{-1}b}$.
By compactness $t = \bigvee_{i=1}^{n}a_{i}^{-1}b_{i}$, where $a_{i},b_{i} \in S$.
\end{proof}

We can now prove our main theorem.
We use non-commuative Stone duality on objects only;
we could use the full version of non-commutative Stone duality, but it is easier to construct embeddings directly.

\begin{theorem}\label{them:vicky}
Let $S$ be a Boolean ample monoid that satisfies condition (C).
Then $S$ can be embedded as a full submonoid of a Boolean inverse monoid $T$
in such a way that every non-zero element of $T$ is a finite
join of elements of the form $a^{-1}b$ where $a,b \in S$.
\end{theorem}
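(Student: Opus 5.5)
The plan is to pass to the dual side, build the groupoid of fractions there, and come back via $\mathsf{KB}$. Since $S$ is a Boolean ample monoid, $\mathsf{C}(S)$ is a Boolean category, and by Proposition~\ref{prop:fury}(1) it is cancellative. By Proposition~\ref{prop:mary-anne}, condition (C) makes it right reversible. Theorem~\ref{them:four} then gives a groupoid of fractions $G$ containing $\mathsf{C}(S)$ as a wide subcategory with $G=\mathsf{C}(S)^{-1}\mathsf{C}(S)$. By Theorem~\ref{them:stor}, $G$ is a Boolean groupoid with basis $\beta^{-1}\beta$, where $\beta$ is the set of compact-open local bisections of $\mathsf{C}(S)$. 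By Proposition~\ref{prop:main}, $\mathsf{C}(S)$ is open in $G$. Put $T=\mathsf{KB}(G)$; this is a Boolean inverse monoid by Proposition~\ref{prop:fury}(4).

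Next I construct the embedding directly rather than through the full duality. Since $\mathsf{C}(S)$ is open in $G$, every compact-open local bisection of $\mathsf{C}(S)$ is a compact-open local bisection of $G$. Define $\iota\colon S\to T$ by $a\mapsto X_a$, using $S\cong\mathsf{KB}(\mathsf{C}(S))$ from Theorem~\ref{them:epic}(1). The following checks are then needed.
\begin{enumerate}
\item $\iota$ is injective, by Theorem~\ref{them:epic}.
\item Subset products of local bisections computed in $\mathsf{C}(S)$ agree with those computed in $G$, because $\mathsf{C}(S)$ is a subcategory; so $\iota$ is a monoid homomorphism.
\item The identity of $T$ is $G_o=\mathsf{C}(S)_o$, which is the image of $1$.
\end{enumerate}

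Fullness also needs care. The idempotents of $T$ are the compact-open subsets of $G_o$, that is, the clopen subsets of $\mathsf{C}(S)_o$, because the embedding is wide. By Theorem~\ref{them:epic}(1) these are precisely the $X_e$ with $e\in\mathsf{Proj}(S)=\mathsf{E}(S)$. So $\mathsf{E}(T)\subseteq\iota(S)$. I would also note that $\iota$ preserves $^{\ast}$, $^{+}$ and compatible joins, although the statement only asks for a full submonoid.

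For the join condition, let $Z$ be a non-zero element of $T$, so $Z$ is a non-empty compact-open local bisection of $G$. By Theorem~\ref{them:stor}, $\beta^{-1}\beta$ is a basis. So $Z$ is a union of sets $U^{-1}V\subseteq Z$ with $U,V\in\beta$, and by compactness finitely many suffice. Each $U^{-1}V$ equals $\iota(a)^{-1}\iota(b)$ in $T$ for the $a,b\in S$ with $X_a=U$ and $X_b=V$, where inverse in $\mathsf{KB}(G)$ is setwise inversion. These finitely many sets lie in the local bisection $Z$, so they are pairwise compatible, and their union is their join in $T$. Discarding empty pieces, $Z$ is a finite join of elements $a^{-1}b$. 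Alternatively, once fullness is established I could invoke Proposition~\ref{prop:homuz} after identifying $\mathsf{C}(T)\cong G$ via Theorem~\ref{them:epic}(2).

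I expect the main obstacle to be making this identification of subsets rigorous. One must check that compact-open subsets of $\mathsf{C}(S)$ remain compact-open in $G$ (openness from Proposition~\ref{prop:main}, compactness being intrinsic). One must also check that a union of compatible compact-open bisections is genuinely the join in $\mathsf{KB}(G)$.
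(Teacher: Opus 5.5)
Your proposal is correct and follows the paper's proof essentially step for step: cancellativity of $\mathsf{C}(S)$, right reversibility from condition (C), the groupoid of fractions $G$, its Booleanness and the openness of $\mathsf{C}(S)$ in $G$, the choice $T=\mathsf{KB}(G)$, and fullness from wideness. The only difference is the finite-join property: you derive it directly from the basis $\beta^{-1}\beta$ of Theorem~\ref{them:stor} plus compactness, whereas the paper cites Proposition~\ref{prop:homuz}, which needs an identification of $\mathsf{C}(T)$ with $G$ compatible with $A \mapsto A^{u}$ that the paper leaves implicit.
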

\begin{proof} Let $S$ be a Boolean ample monoid satisfying condition (C).
By Theorem~\ref{them:ruth} and part (1) of Proposition~\ref{prop:fury}, the category $\mathsf{C}(S)$ is cancellative.
By Proposition~\ref{prop:mary-anne}, the cancellative category $\mathsf{C}(S)$ is right reversible because condition (C) holds.
Thus by Theorem~\ref{them:four}, there is a groupoid $G$ which contains $\mathsf{C}(S)$ as a wide subcategory.
However, by Theorem~\ref{them:stor}, $G$ is, in fact, a Boolean groupoid.
Thus $\mathsf{C}(S) \subseteq G$ as an open and wide subcategory;
we proved openness in Proposition~\ref{prop:main}.
It follows that every compact-open local bisection of $\mathsf{C}(S)$ is a compact-open local bisection of $G$.
Thus $\mathsf{KB}(\mathsf{C}(S))$ is a full submonoid of $\mathsf{KB}(G)$; 
the fact that we have a full submonoid is a result of the fact that the category $\mathsf{C}(S)$
is wide in $G$.
By part (1) of Theorem~\ref{them:epic}, $S$ is isomorphic to $\mathsf{KB}(\mathsf{C}(S))$, and we identify these two monoids.
By part (4) of Proposition~\ref{prop:fury},
$\mathsf{KB}(G)$ is a Boolean inverse monoid $T$, say.
We have therefore shown that $S$ is a full submonoid of the Boolean inverse monoid $T$.
Finally, by Proposition~\ref{prop:homuz}, every  non-zero element of $T$ is a finite
join of elements of the form $a^{-1}b$ where $a,b \in S$.
\end{proof}

Etale groupoids of fractions  arise naturally in the theory of $C^{\ast}$-algebras \cite{RW}.


\end{document}